\newcommand{\R}{\mathbb{R}}
\newcommand{\N}{\mathbb{N}}
\newcommand{\am}{\mathrm{am}}
\newcommand\vd{\operatorname{visdist}}
\newtheorem{theorem}{Theorem}
\newtheorem{lemma}{Lemma}
\newtheorem{proposition}{Proposition}
\newtheorem{question}{Question}
\newtheorem*{thnn}{Theorem}
\theoremstyle{definition}
\newtheorem{definition}{Definition}
\theoremstyle{remark}
\newtheorem{remark}{Remark}
\newtheorem{example}{Example}
\begin{document}
\title{A quantitative version of the Morse lemma and quasi-isometries fixing the ideal boundary}
\author{Vladimir Shchur}
\address{Universit\'e Paris-Sud 11, F-91405 Orsay Cedex}
\address{\'Ecole normale sup\'erieure, 45 rue d'Ulm F-75230 Paris Cedex 05}
\address{vlshchur@gmail.com}
%\date{}                                           % Activate to display a given date or no date

\begin{abstract}
The Morse lemma is fundamental in hyperbolic group theory. Using exponential
contraction, we establish an upper bound for the Morse lemma that is optimal
up to multiplicative constants, which we demonstrate by presenting a
concrete example. We also prove an ``anti'' version of the Morse lemma. We
introduce the notion of a geodesically rich space and consider applications
of these results to the displacement of points under quasi-isometries that
fix the ideal boundary.
\end{abstract}

\keywords{Morse lemma, quasi-isometry, quasi-geodesic, hyperbolic space,
hyperbolic group}
\subjclass[2010]{20F65, 20F67}

\maketitle
\markright{A QUANTITATIVE VERSION OF THE MORSE LEMMA AND .\,.\,.}

\section{Introduction}

Roughly speaking, the Morse lemma states that in a hyperbolic metric space,
a $\lambda$-quasigeodesic $\gamma$ belongs to a $\lambda^2$-neighborhood of
every geodesic $\sigma$ with the same endpoints. Our aim is to prove the
optimal upper bound for the Morse lemma.

\begin{theorem}[Morse lemma]\label{MainTheorem}
Let $\gamma$ be a $(\lambda,c)$-quasi-geodesic in a $\delta$-hyperbolic space
$E$ and $\sigma$ be a geodesic segment connecting its endpoints. Then $\gamma$
belongs to an $H$-neighborhood of $\sigma$, where
$$
H=\lambda^2(A_1c+A_2\delta),
$$
where $A_1$ and $A_2$ are universal constants.
\end{theorem}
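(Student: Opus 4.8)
The plan is to bound the single number $D:=\sup_t d(\gamma(t),\sigma)$, since the assertion that $\gamma$ lies in the $H$-neighbourhood of $\sigma$ is precisely the inequality $D\le H$. First I would pass to a tame representative of $\gamma$: by the usual taming procedure every $(\lambda,c)$-quasi-geodesic lies within bounded Hausdorff distance of a continuous, rectifiable quasi-geodesic with the same endpoints whose quasi-geodesic constants are changed only by universal linear factors. Since this alteration does not affect the shape $\lambda^2(A_1c+A_2\delta)$ of the target bound, I may and do assume $\gamma$ is continuous.

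The engine is the exponential contraction of the nearest-point projection $\pi=\pi_\sigma$ onto $\sigma$, which I would record in the following form: there are universal constants such that any path staying outside the open $r$-neighbourhood of $\sigma$ and joining two points whose $\pi$-images lie a distance $\Delta$ apart has length at least
\[
\Delta\cdot 2^{\,r/\delta}
\]
up to those constants. In words, progressing a unit distance along $\sigma$ while remaining at height $r$ costs length exponential in $r/\delta$; this is the quantitative form of the divergence of geodesics and is the only place hyperbolicity enters.

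For the core estimate I would choose $p=\gamma(t_0)$ with $d(p,\sigma)$ within $1$ of $D$ and let $[t_1,t_2]\ni t_0$ be the maximal parameter interval on which $d(\gamma(\cdot),\sigma)\ge D/2$, writing $x=\gamma(t_1)$ and $y=\gamma(t_2)$; thus $x,y$ sit at height $D/2$ and the subarc between them never drops below $D/2$. The decisive point is that \emph{two} uses of the quasi-geodesic inequality each contribute a factor $\lambda$. On the one hand the speed bound forces the subarc to be long in parameter: from $d(x,p),d(p,y)\ge D/2$ together with $d(\gamma(s),\gamma(t))\le\lambda|s-t|+c$ one gets $|t_2-t_1|\gtrsim D/\lambda$. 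On the other hand the lower inequality $d(x,y)\ge\tfrac1\lambda|t_2-t_1|-c$ then yields $d(x,y)\gtrsim D/\lambda^2-c$, so that whenever $D$ exceeds a fixed multiple of $\lambda^2c$ the endpoints $x$ and $y$ are themselves a definite distance apart. This product of two factors of $\lambda$ is what produces the quadratic constant, and it already disposes of the thin ``spike'' excursions.

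The main obstacle is the remaining, genuinely two-dimensional case of a \emph{fat} excursion, in which $x$ and $y$ are far apart while their projections $\pi x,\pi y$ need not be: here the speed/backtracking argument yields no contradiction and one must return to exponential contraction. The point to push through is that an excursion staying at height $\ge D/2$ and joining points at genuine distance apart cannot keep its $\pi$-image from spreading along $\sigma$ without becoming exponentially long in $D/\delta$, which the linear bound on the subarc's length, $\lesssim\lambda\,d(x,y)$, forbids once $D\gtrsim\delta\log\lambda$. Balancing the two regimes---backtracking controlled by the pair of quasi-geodesic inequalities, spreading controlled by exponential contraction---and arranging the thresholds so that the two factors of $\lambda$ combine to exactly $\lambda^2$ with only linear dependence on $c$ and $\delta$, and with no stray logarithmic or higher-order factors, is the delicate part of the argument. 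Once $D\le\lambda^2(A_1c+A_2\delta)$ is secured, the theorem follows at once.
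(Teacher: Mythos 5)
Your preliminary steps (taming $\gamma$ to a continuous quasi-geodesic, and the exponential contraction of projections onto $\sigma$) match the paper's Lemmas on continuous quasi-geodesics and exponential contraction, but the core of your argument has a genuine gap, located exactly where you say the argument is ``delicate.'' Exponential contraction gives only an \emph{upper} bound on how far the projection of a subarc can spread along $\sigma$; it can never force the $\pi$-image to spread. With your threshold set at height $D/2$, the triangle inequality gives $d(x,y)\le \tfrac D2+|\pi x-\pi y|+\tfrac D2$, while your two quasi-geodesic inequalities give only $d(x,y)\gtrsim D/\lambda^2-c$. Since $D/\lambda^2\ll D$, the configuration $|\pi x-\pi y|\le 8\delta$ together with $d(x,y)$ as large as $D$ is consistent with every inequality you have written, for arbitrarily large $D$. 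This is not an artifact: even in a tree, an arc can leave $\sigma$ at a single point, climb to height $D$ at $p$, and descend along a second branch to a point $y$ with $d(x,y)=D$ and $\pi x=\pi y$; all of your estimates (the lower bound on $d(x,y)$, the projection bound, the length bound) are satisfied by this picture no matter how large $D$ is. So no contradiction appears in your ``fat'' case and no bound on $D$ follows. (A secondary slip: the length of the subarc between $x$ and $y$ is controlled by $\lambda^2(d(x,y)+c)$, not $\lambda\,d(x,y)$, as in the paper's Lemma on $\Delta$-length of quasi-geodesics.)

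The standard repair of your scheme is to take a \emph{fixed} excursion threshold $R$, chosen so that the contraction factor beats the length bound, i.e.\ $e^{-KR/\delta}\lambda^2\le\tfrac12$; then $d(x,y)\le 2R+8\delta+\tfrac1{2\lambda}|t_2-t_1|$, the lower quasi-geodesic inequality gives $|t_2-t_1|\le 2\lambda(2R+8\delta+c)$, and $D\le R+\lambda|t_2-t_1|+c$. But this forces $R\sim\delta\ln\lambda$ and yields $H=O\bigl(\lambda^2(c+\delta\ln\lambda)\bigr)$ --- precisely the non-optimal bound the paper records in its ``Attempts'' subsection before giving the real proof. The logarithm is intrinsic to any argument that projects only onto $\sigma$; it cannot be removed by rebalancing thresholds. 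The paper eliminates it with an idea absent from your proposal: at the farthest point $z$ it erects a perpendicular $\sigma_0'$ to $\sigma$ and studies projections of $\gamma$ onto that \emph{different} geodesic, then iterates this construction, obtaining nested sub-quasi-geodesics $\gamma_i$, perpendiculars $\sigma_i'$, and gaps $d_i$ satisfying $C_0e^{Kd_{i+1}/2\delta}\le L_i-L_{i+1}$ and $L_i\le 8\lambda^2 d_i$; the telescoping sum is then controlled by comparison with $\int_0^\infty e^{-X}\,dX=1$, which is what produces a bound linear in $c$ and $\delta$ with the single factor $\lambda^2$. Without this second mechanism (or an equivalent, such as deducing the Morse lemma from the anti-Morse lemma), the claimed bound $H=\lambda^2(A_1c+A_2\delta)$ is out of reach of your argument.
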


We prove this theorem with $A_1=4\cdot78=312$ and
$$
A_2=4 \biggl(78+\frac{133}{\ln2}e^{157\ln2/28}\biggr)
$$
in Section~\ref{PrfMrsLem}. This result is optimal up to the value of these
constants, i.e., there exists an example of a quasi-geodesic such that $H$ is
the distance of the farthest point of $\gamma$ from $\sigma$ (see
Section~\ref{lastPr}).

The Morse lemma plays an important role in the geometry of hyperbolic
spaces. For example, it is used to prove that hyperbolicity is invariant
under quasi-isometries between geodesic spaces~\cite{ghys} (see Chapter~5.2,
Theorem~12): let $E$ and $F$ be $\delta_1$- and $\delta_2$-hyperbolic
geodesic spaces. If there exists a $(\lambda,c)$-quasi-isometry between
these two spaces, then
$$
\delta_1\le8\lambda(2H+4\delta_2+c).
$$

Hyperbolic metric spaces have recently appeared in discrete mathematics and
computer science (see, e.g.,~\cite{chepoi}). The $\delta$-hyperbolicity
turns out to be more appropriate than other previously used notions of
approximation by trees (e.g., tree width). This motivates our search for
optimal bounds for a cornerstone of hyperbolic group theory like the Morse
lemma.

Gromov's quasi-isometry classification problem for groups~\cite{gromov1}
provides another motivation. When two groups are shown to be
non-quasi-isometric, it would be desirable to give a quantitative
measure of this, such as a lower bound on the distortion of maps between
balls in these groups (we thank Itai Benjamini for bringing this issue to
our attention). We expect our optimal bound in the Morse lemma to be
instrumental in proving such lower bounds. As an indication of this, we
show that the center of a ball in a tree cannot be moved very far by a
self-quasi-isometry.

\begin{proposition}\label{ballCenter}
Let $O$ be a center of a ball of radius $R$ in a $d$-regular metric tree
$T$ ($d\ge3$). Let $f$ be $(\lambda,c)$-self-quasi-isometry of this ball.
Then for any image $f(O)$ of the center $O$,
$$
d(f(O),O)\le\min\{R,H+c+\lambda(c+1)\}.
$$
\end{proposition}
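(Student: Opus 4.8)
The plan is to treat the two arguments of the minimum separately. The bound $d(f(O),O)\le R$ is immediate: since $f$ is a self-quasi-isometry of the ball $\bar B(O,R)$, the point $f(O)$ lies in $\bar B(O,R)$. All the work goes into the bound $d(f(O),O)\le H+c+\lambda(c+1)$. The first step is to exploit the hypothesis $d\ge 3$: from $O$ there emanate at least three distinct edges, so I can choose three points $x_1,x_2,x_3$ on the sphere $S(O,R)$ lying in three different branches, so that $d(O,x_i)=R$ and each geodesic $[x_i,x_j]$ passes through $O$. Thus $O$ is the median of $x_1,x_2,x_3$, and $O\in[x_i,x_j]$ for every pair.

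Next I would transport this configuration through $f$ using the Morse lemma. Each segment $[x_i,x_j]$ is a geodesic, so its image $f([x_i,x_j])$ is a $(\lambda,c)$-quasi-geodesic joining $f(x_i)$ and $f(x_j)$; by Theorem~\ref{MainTheorem} (with $\delta=0$ in a tree) it lies in the $H$-neighborhood of the geodesic $[f(x_i),f(x_j)]$. Since $O\in[x_i,x_j]$, the point $f(O)$ lies within $H$ of each of the three geodesics $[f(x_i),f(x_j)]$. In a tree these three geodesics form a tripod centered at the median $m$ of $f(x_1),f(x_2),f(x_3)$, with legs $L_1,L_2,L_3$. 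The nearest-point projection of $f(O)$ onto this tripod lands on some leg, say $L_1$; then every path from $f(O)$ to $L_2\cup L_3=[f(x_2),f(x_3)]$ must pass through $m$, so $d(f(O),m)=d\bigl(f(O),[f(x_2),f(x_3)]\bigr)\le H$. Hence $f(O)$ is pinned within $H$ of $m$, and it remains to control $d(m,O)$.

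The third and decisive step is to estimate $d(m,O)$ from the quasi-isometry inequalities together with the fact that the sources sit at distance exactly $R$ from $O$ while their images sit inside $\bar B(O,R)$. Writing $a_i=d(m,f(x_i))$, one has $d(f(x_i),f(x_j))=a_i+a_j$, and the quasi-isometry estimates $\tfrac1\lambda d(x_i,x_j)-c\le d(f(x_i),f(x_j))\le \lambda d(x_i,x_j)+c$ applied to $d(x_i,x_j)=2R$ and $d(O,x_i)=R$ convert, via the tree formula $d(m,O)$ expressed through Gromov products $(f(x_j)\mid f(x_k))_O=\tfrac12\bigl(d(O,f(x_j))+d(O,f(x_k))-d(f(x_j),f(x_k))\bigr)$, into a bound on $d(m,O)$; the additive defects $c$ and $\lambda(c+1)$ emerge precisely from these inequalities (the stray $\lambda$ coming from comparing an image edge of unit length with its source). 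Combining the two estimates yields
\[
d(f(O),O)\le d(f(O),m)+d(m,O)\le H+\bigl(c+\lambda(c+1)\bigr).
\]

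The main obstacle is exactly this last conversion from the image side back to $O$. The crude quasi-isometry bounds alone only locate $m$ inside a large region, because a quasi-isometry may stretch one branch while compressing the antipodal one; the content of the proposition is that the rigidity of the tree and the requirement that $f$ be a self-map of the \emph{whole} ball forbid the images $f(x_1),f(x_2),f(x_3)$ from clustering on one side, which is what forces their median $m$ — and hence $f(O)$ — to stay near $O$. I expect the delicate point to be making this spreading quantitative, using the three branches symmetrically (and, if needed, a quasi-inverse of $f$) so that the position of $m$ relative to $O$ is controlled by the additive defects rather than by $R$; once that is done, the first two steps assemble the stated inequality.
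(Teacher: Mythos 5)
Your preparatory steps are sound: $d(f(O),O)\le R$ is immediate, the three sphere points $x_1,x_2,x_3$ in distinct branches do have $O$ as their median, Theorem~\ref{MainTheorem} (with $\delta=0$ in a tree) puts $f(O)$ within $H$ of each geodesic $[f(x_i),f(x_j)]$, and your tripod computation correctly yields $d(f(O),m)\le H$ for the median $m$ of the three image points. The genuine gap is the last step, which you yourself flag: you never prove $d(m,O)\le c+\lambda(c+1)$, and the ingredients you list \emph{cannot} prove any bound on $d(m,O)$ independent of $R$. In a tree $d(O,m)$ equals the largest of the three Gromov products $(f(x_j)\,|\,f(x_k))_O$, and the only available estimates, $d(O,f(x_i))\le R$ (the image lies in the ball) and $d(f(x_i),f(x_j))\ge 2R/\lambda-c$, give merely $d(O,m)\le R(1-1/\lambda)+c/2$, which is useless for large $R$. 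Worse, the ``clustered'' configuration in which all three images sit deep in one branch, pairwise $2R/\lambda-c$ apart, with $f(O)$ nearby, satisfies \emph{every} quasi-isometry inequality among the four points $O,x_1,x_2,x_3$ and keeps all images inside the ball; so no argument confined to a fixed finite test configuration can exclude it. Your proposed rescue --- that being a self-map of the whole ball forbids clustering --- is not true as stated: the isometric self-embedding of the infinite rooted binary tree that prepends a letter to every address sends the entire tree into one branch and moves the root arbitrarily far. What rules this out in a ball is a counting argument combined with the coarse density of the image, neither of which appears in your sketch. The structural defect is that your test points are chosen in advance, without reference to $f$.

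The paper supplies exactly this missing global ingredient, with an adaptive choice of test points in place of your median. Call \emph{green} the points that $f$ sends into the branch of the punctured ball containing $f(O)$, and \emph{red} all other points. Since a $c$-neighborhood of every boundary point must contain a point of the image (a coarse-surjectivity property the paper reads into Definition~\ref{defQIS}; your proposal never uses it), there are at least $(d-1)d^{R-c-1}$ red points, whereas a single component of the complement of the ball $B(O,c+1)$ contains too few points to hold them all. Hence there are red points $r_1,r_2$ in \emph{different} components, so the geodesic $[r_1,r_2]$ passes within $c+1$ of $O$. Then $f([r_1,r_2])$ passes within $\lambda(c+1)+c$ of $f(O)$ and lies in the $H$-neighborhood of the geodesic $[f(r_1),f(r_2)]$; and because $f(r_1),f(r_2)$ are, by the very definition of red, outside the green branch, every path from $f(O)$ to $[f(r_1),f(r_2)]$ must pass through $O$. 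This gives $d(f(O),O)\le H+c+\lambda(c+1)$ directly, with the separation through $O$ obtained on the image side by choosing the endpoints according to where $f$ sends them. If you wish to salvage your median scheme, you would have to import this red/green counting to guarantee that two of your test points have images separated by the green branch --- a guarantee that a triple fixed in advance cannot provide.
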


Because $\delta=0$ for a tree, we have $d(f(O),O)\le2A_1\lambda^2c$ for
sufficiently large $\lambda$. We prove this proposition in
Section~\ref{lastPr}.

We present an example of a $(\lambda,c)$-quasi-isometry of a ball in a
$d$-reqular tree that moves the center a distance $\lambda c$. We are
currently unable to fill the gap between $\lambda c$ and $\lambda^2c$.

We give a second illustration. In certain hyperbolic metric spaces,
self-quasi-isometries fixing the ideal boundary move points a bounded
distance. Directly applying the Morse lemma yields a bound of
$H\sim\lambda^2c$, while the examples that we know achieve merely
$\lambda c$. For this problem, we can fill the gap partially. Our argument
relies on the following theorem, which we call the anti-Morse lemma.

\begin{theorem}[anti-Morse lemma]\label{SecondTheorem}
Let $\gamma$ be a $(\lambda,c)$-quasi-geodesic in a $\delta$-hyperbolic
metric space and $\sigma$ be a geodesic connecting the endpoints of
$\gamma$. Let $4\delta\ll\ln\lambda$. Then $\sigma$
belongs to a $A_3(c{+}\delta)\ln\lambda$-neighborhood of $\gamma$, where
$A_3$ is some constant.
\end{theorem}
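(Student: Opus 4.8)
The plan is to establish the reverse inclusion — that the geodesic $\sigma$ stays close to the quasi-geodesic $\gamma$ — by exploiting the already-proven Morse lemma (Theorem~\ref{MainTheorem}) together with the exponential divergence of geodesics in a $\delta$-hyperbolic space. The Morse lemma gives one direction, namely that $\gamma$ lies in an $H$-neighborhood of $\sigma$ with $H\sim\lambda^2(c+\delta)$; the point of the anti-Morse lemma is that the \emph{other} direction is much cheaper, costing only a factor logarithmic in $\lambda$ rather than quadratic. So the first step I would take is to fix a point $p\in\sigma$ and ask how far it can be from $\gamma$, and set up a proof by contradiction: suppose $\vd(p,\gamma)=D$ is large, and derive a lower bound on the length (or on the quasi-geodesic parameters) that is incompatible with $\gamma$ being a genuine $(\lambda,c)$-quasi-geodesic.

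The mechanism I expect to drive the estimate is exponential contraction, which the abstract flags as the main tool. Concretely, I would consider the sub-segment of $\sigma$ consisting of points within distance $D$ of $p$; by thin-triangle / tree-approximation reasoning in the $\delta$-hyperbolic space, the portion of $\gamma$ that ``shadows'' this part of $\sigma$ must travel out to distance roughly $D$ from the geodesic and back, and the part of $\gamma$ reaching height $D$ above $\sigma$ is forced by exponential contraction onto $\sigma$ to have length at least exponential in $D/\delta$. First I would make this precise: the projection of $\gamma$ onto $\sigma$ behaves almost monotonically up to controlled backtracking, and the length of $\gamma$ between its first and last approach to the ``bad'' region is bounded below by something like $(\text{const})\,e^{cD/\delta}$. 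Then I would compare this length against the quasi-geodesic upper bound: the arc of $\gamma$ subtending a segment of $\sigma$ of length $L$ has length at most $\lambda L+c$. Balancing the exponential lower bound against the linear upper bound yields $e^{\Theta(D/\delta)}\lesssim \lambda$, and taking logarithms produces exactly the desired $D\le A_3(c+\delta)\ln\lambda$.

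The hypothesis $4\delta\ll\ln\lambda$ is what makes this balancing legitimate: it guarantees that $\lambda$ is large enough relative to $\delta$ that the logarithmic bound $\ln\lambda$ dominates the additive constants coming from $c$ and $\delta$, and that the exponential-contraction regime is actually entered (otherwise, for $\lambda$ close to $1$, one would just invoke the trivial neighborhood bound and there would be nothing exponential to exploit). I would therefore treat the regime $4\delta\ll\ln\lambda$ as the place where the exponential term is the dominant contribution and absorb the lower-order terms into the constant $A_3$.

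The main obstacle, and the step I would spend the most care on, is turning the heuristic ``$\gamma$ must travel exponentially far to reach height $D$'' into a rigorous length lower bound. This requires a clean quantitative form of exponential contraction: if a path stays at height at least $h$ above a geodesic over a horizontal span, then its length grows like $e^{h/\delta}$ times that span — essentially a divergence estimate. I would likely reuse, or extract from, whatever contraction lemma underlies the proof of Theorem~\ref{MainTheorem} in Section~\ref{PrfMrsLem}, since the quadratic bound there is itself obtained by integrating exponential contraction. The delicate bookkeeping will be in controlling backtracking of $\gamma$ and in making sure the subarc of $\gamma$ I bound below is matched to the correct subsegment of $\sigma$, so that the linear upper bound $\lambda L+c$ is applied to a comparable length scale; getting the two length scales to refer to the same span is where the constant $A_3$ is determined and where the argument is most likely to require a careful, rather than cosmetic, estimate.
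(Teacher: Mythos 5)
Your approach is essentially the same as the paper's. The paper also plays the exponential contraction lemma (Lemma~\ref{expC}) against the quasi-geodesic length bound and takes logarithms: its Lemma~\ref{lem11} shows that an arc of $\gamma$ staying at distance at least $C_1\ln\lambda+C_2$ from $\sigma$ has projection on $\sigma$ of length at most $\max(8\delta,C_3\ln\lambda)$, and the theorem follows because every point of $\sigma$ lies under such an arc and is therefore within $(C_1+C_4)\ln\lambda$ of one of its endpoints; your contradiction-at-a-point organization is this same estimate run in reverse.

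One intermediate step of yours is false as stated, although it is fixable and does not change the outcome. You claim that an arc of $\gamma$ subtending a segment of $\sigma$ of length $L$ has length at most $\lambda L+c$. No such linear bound holds for quasi-geodesics: the paper's own optimality example (Section~\ref{lastPr2}), a path running back and forth along a segment of a tree, subtends essentially a single point of $\sigma$ yet has length $\sim\lambda^2c$. The correct substitute is the $\Delta$-length bound of Lemma~\ref{qgLen}, $L_\Delta\le4\lambda^2R$, where $R$ is the distance between the \emph{endpoints} of the arc; moreover $R$ is controlled by the heights of those endpoints plus the projection length, not by the subtended span alone. This is precisely the matching problem you flag in your final paragraph, and the paper resolves it by anchoring the arc at the points $y_u$, $y_v$ of the arc nearest $\sigma$, whose heights equal the chosen threshold, so that $R$ is at most twice the threshold plus the projection length; the projection length then appears on both sides of the resulting inequality with coefficient $O(\lambda^{-2})$ on the right and cancels. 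In particular no appeal to Theorem~\ref{MainTheorem} or to any a priori bound on the distance is needed, only the coarse surjectivity of the projection of $\gamma$ onto $\sigma$, which the paper uses as well. After taking logarithms, the discrepancy between $\lambda$ and $4\lambda^2$ affects only the constant $A_3$, so your balancing still yields the stated bound $A_3(c+\delta)\ln\lambda$.
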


We prove Theorem~\ref{SecondTheorem} in Section~\ref{antiMorseSection}.
In Section~\ref{idBoundary}, we define the class of geodesically rich
hyperbolic spaces (it contains all Gromov hyperbolic groups), for which we
can prove the following statement.

\begin{theorem}\label{GRSTheorem}
Let $X$ be a geodesically rich $\delta$-hyperbolic metric space and $f$ be a
$(\lambda,c)$-self-quasi-isometry fixing the boundary $\partial X$. Then for
any point $O\in X$, the displacement $d(O,f(O))\le\max\{r_0, (A_4+c)\lambda\ln\lambda\}$,
where $r_0, A_4$ are constants depending on the space $X$.
\end{theorem}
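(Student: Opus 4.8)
The plan is to bound the displacement $d(O, f(O))$ by exploiting that $f$ fixes the ideal boundary $\partial X$, which forces $f$ to nearly preserve the endpoints of bi-infinite geodesics, and then to combine this with the anti-Morse lemma to control how far $O$ can move. First I would use the hypothesis that $X$ is \emph{geodesically rich} to produce, for a given point $O$, a bi-infinite geodesic line $\sigma$ (or a suitable family of geodesics through or near $O$) whose two endpoints $\xi_-, \xi_+ \in \partial X$ are ``well separated'' in the sense that $O$ lies close to $\sigma$ and the Gromov product $(\xi_- \mid \xi_+)_O$ is uniformly bounded. The precise content of geodesic richness (to be set up in Section~\ref{idBoundary}) should guarantee that such a geodesic exists with $O$ within a constant $r_0$ of it; this is where the constant $r_0$ in the statement enters.

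Next I would analyze the image of $\sigma$ under $f$. Since $f$ is a $(\lambda,c)$-quasi-isometry, the image $f(\sigma)$ is a $(\lambda,c)$-quasi-geodesic, and because $f$ fixes $\partial X$ it has the \emph{same} endpoints $\xi_-, \xi_+$ as $\sigma$ (at least up to the standard identification of boundaries). Let $\sigma'$ be a genuine geodesic connecting $\xi_-$ and $\xi_+$; by uniqueness of geodesics up to bounded Hausdorff distance in a hyperbolic space, $\sigma'$ is within $O(\delta)$ of $\sigma$, so $O$ is within $r_0 + O(\delta)$ of $\sigma'$. The key step is then to apply the anti-Morse lemma (Theorem~\ref{SecondTheorem}) to the quasi-geodesic $f(\sigma)$ and the geodesic $\sigma'$: it tells us that $\sigma'$ lies in an $A_3(c+\delta)\ln\lambda$-neighborhood of $f(\sigma)$. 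In particular, the point of $\sigma'$ nearest to $O$ is within $A_3(c+\delta)\ln\lambda$ of some point $f(p)$ with $p \in \sigma$.

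Now I would close the loop by tracking $O$ itself. Since $O$ is $O(r_0 + \delta)$-close to a point $q$ on $\sigma$, its image $f(O)$ is within $\lambda \cdot O(r_0+\delta) + c$ of $f(q)$, and $f(q)$ lies on the quasi-geodesic $f(\sigma)$. Combining the anti-Morse bound (which locates points of $\sigma'$ near $f(\sigma)$) with the fact that $O$ is close to $\sigma'$, I would triangulate: $O$ is near $\sigma'$, $\sigma'$ is in a logarithmic neighborhood of $f(\sigma)$, and along $f(\sigma)$ the quasi-geodesic parameter corresponding to $O$ is pinned down by the endpoint-matching of $f$. Carefully chaining these comparisons—and using that the anti-Morse neighborhood grows only like $\ln\lambda$ rather than $\lambda^2$—should yield a displacement bound of the form $(A_4 + c)\lambda \ln\lambda$, with the $\lambda$ factor arising from the quasi-isometry distortion applied to the bounded initial separation and the $\ln\lambda$ factor inherited directly from the anti-Morse lemma.

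The main obstacle I anticipate is the endpoint-matching step: turning the \emph{topological} statement ``$f$ fixes $\partial X$'' into a \emph{metric} statement that $f(\sigma)$ and $\sigma$ share endpoints with quantitative control, and ensuring this control is uniform enough (independent of the particular $\xi_\pm$) to feed into the anti-Morse lemma. The hypothesis $4\delta \ll \ln\lambda$ required by Theorem~\ref{SecondTheorem} must also be verified or handled separately for small $\lambda$, which is presumably why the bound takes the form of a maximum with the constant $r_0$—for bounded $\lambda$ the displacement is simply absorbed into $r_0$. The precise definition of geodesic richness will have to be engineered so that it supplies exactly the uniformity needed here, so the cleanest route is to state richness in terms of the existence of the separating geodesics with the constants $r_0, A_4$ built in.
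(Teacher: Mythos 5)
Your approach has a genuine gap at precisely the step you flag as the ``main obstacle'': pinning down the position of $f(O)$ \emph{along} the quasi-geodesic $f(\sigma)$. Knowing that $O$ lies near $\sigma$, that $f(\sigma)$ has the same endpoints at infinity as $\sigma$, and that (by the anti-Morse lemma) $\sigma$ lies in a $\ln\lambda$-neighborhood of $f(\sigma)$, only tells you that $O$ is within roughly $\ln\lambda$ of \emph{some} point $f(p)$ with $p\in\sigma$; it says nothing about $|O-f(O)|$. The enemy is sliding: a map that translates along $\sigma$ (think of a hyperbolic isometry whose axis is $\sigma$) fixes both endpoints of $\sigma$ and keeps $f(\sigma)$ at Hausdorff distance zero from $\sigma$, yet moves every point of $\sigma$ arbitrarily far. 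Your argument, as written, uses the boundary-fixing hypothesis only through the two endpoints of the single geodesic $\sigma$, and that information can never exclude sliding; no chaining of neighborhoods along $\sigma$ will close the loop. (Your reading of geodesic richness is also off: the constant $r_0$ is a threshold on the distance between a \emph{pair} of points---here $O$ and $f(O)$---below which nothing is claimed, not a bound on $d(O,\sigma)$.)

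The paper's proof is built exactly to defeat sliding, and it needs both clauses of the definition of geodesic richness applied to the pair $(O,f(O))$, not to $O$ alone. Assuming $d(O,f(O))\ge r_0$, the first clause supplies a geodesic $\gamma$ with $d(O,\gamma)\le r_1$ and $|d(f(O),\gamma)-d(O,f(O))|\le r_2$: this converts the displacement into a distance to a \emph{transverse} geodesic, which sliding cannot hide. The second clause gives a geodesic $\gamma'$ passing within $r_3$ of $f(O)$ such that $f(O)$ is, up to $r_4$, the point of $\gamma'$ nearest $\gamma$. The anti-Morse lemma is then applied to $\gamma'$ (not to a geodesic through $O$): since $f$ fixes $\partial X$, $\gamma'\subset V_{c_1\ln\lambda}(f(\gamma'))$, so some $O'\in\gamma'$ satisfies $|f(O')-f(O)|\le r_3+c_1\ln\lambda$, and the lower quasi-isometry inequality pulls this back to $|O'-O|\le\lambda(r_3+c+c_1\ln\lambda)$---this pull-back through $f$ is where the $\lambda\ln\lambda$ really comes from, not from distorting a bounded initial separation. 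Finally, since $O'\in\gamma'$ and $f(O)$ nearly minimizes the distance from $\gamma'$ to $\gamma$, one gets $d(f(O),\gamma)\le d(O',\gamma)+r_4\le|O'-O|+r_1+r_4$, and then $d(O,f(O))\le d(f(O),\gamma)+r_2$ finishes the bound. Your sketch contains none of this transverse-geodesic mechanism, so as it stands it cannot yield the theorem.
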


We first discuss the geometry of hyperbolic spaces and prove a lemma on the
exponential contraction of lengths of curves with projections on geodesics.
We then discuss the invariance of the $\Delta$-length of geodesics under
quasi-isometries. Using these results, we prove the quantitative version of
the Morse and anti-Morse lemmas. We define the class of geodesically rich
spaces; for this class, we estimate the displacement of points by
self-quasi-isometries that fix the ideal boundary. Finally, we show that
this class includes all Gromov hyperbolic groups.

\section{The geometry of $\delta$-hyperbolic spaces}

Let $E$ be a metric space with the metric $d$. We also write $|x-y|$ for the
distance $d(x,y)$ between two points $x$ and $y$ of the space $E$. For a
subset $A$ of $E$ and a point $x$, $d(x,A)$ denotes the distance from $x$ to
$A$.

There are several equivalent definitions of hyperbolic metric spaces. We
first present the most general definition, given by
Gromov~\cite{gromov},~\cite{ghys}, although another definition is more
convenient for us.

\begin{definition}
Gromov's product of two points $x$ and $y$ at a point $z$ is
$$
(x,y)_p=\frac12(|x-p|+|y-p|-|x-y|).
$$
\end{definition}

\begin{definition}\label{defDHG}
A metric space $E$ with a metric $d$ is said to be $\delta$-hyperbolic if
for every four points $p$, $x$, $y$, and $z$,
$$
(x,z)_p\ge\min\{(x,y)_p,(y,z)_p\}-\delta.
$$
\end{definition}

\begin{definition}
A geodesic (geodesic segment, geodesic ray) $\sigma$ in a metric space $E$
is a isometric embedding of a real line (real interval $I$, real half-line
$\R_+$) in $E$.
\end{definition}

We write $xy$ for a geodesic segment between two points $x$ and $y$ (in
general, there could exist several geodesic paths between two points; we
assume any one of them by this notation). A geodesic triangle $xyz$ is a
union of three geodesic segments $xy$, $yz$, and $xz$.

\begin{definition}
A geodesic triangle $xyz$ is said to be $\delta$-thin if for any point
$p\in xy$,
$$
d(p,xz\cup yz)\le\delta.
$$
\end{definition}

A geodesic metric space is a space such that there exists a geodesic segment
$xy$ between any two points $x$ and $y$. It can be easily shown that for a
geodesic space, Definition~\ref{defDHG} is equivalent to the following
definition.

\begin{definition}\label{defDHT}
A geodesic metric space $E$ is $\delta$-hyperbolic if and only if every
geodesic triangle is $\delta/2$-thin (hereafter, we omit the factor $1/2$).
\end{definition}

According to Bonk and Schramm~\cite{bonk}, every $\delta$-hyperbolic metric
space embeds isometrically into a geodesic $\delta$-hyperbolic metric space.
Without loss of generality, we therefore consider only geodesic
$\delta$-hyperbolic spaces in what follows.

\begin{definition}
In a metric space, a \emph{perpendicular} from a point to a curve (in
particular, a geodesic) is a shortest path from this point to the curve.
\end{definition}

Of course, a perpendicular is not necessarily unique.

\begin{lemma}\label{lemAC}
In a geodesic $\delta$-hyperbolic space, let $b$ be a point and $\sigma$ be
a geodesic such that $d(b,\sigma)=R$. Let $ba$ be a perpendicular from $b$
to $\sigma$, where $a\in\sigma$. Let $c$ be a point of $\sigma$ such that
$|b-c|=R+2\Delta$. Then $|a-c|\le2\Delta+4\delta$.
\end{lemma}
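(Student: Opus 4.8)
The plan is to apply the thinness of geodesic triangles to the triangle $abc$, whose three sides are the perpendicular $ba$ (of length $R$), a geodesic segment $bc$ (of length $|b-c|=R+2\Delta$), and the subsegment of $\sigma$ joining $a$ to $c$, which I take as the side $ac$ since $\sigma$ is a geodesic. The decisive feature I would exploit is that $a$ realizes the distance from $b$ to $\sigma$: every point lying on $\sigma$, and in particular every point of the side $ac$, is at distance at least $R$ from $b$.

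First I would parametrize the side $ac$ by the point $p_t$ at distance $t$ from $a$, so that $|a-c|=t+|p_t-c|$, and record that $|b-p_t|\ge R$ for all $t$. By Definition~\ref{defDHT} (with the stated convention that triangles are $\delta$-thin) there is a point $q_t$ on $ba\cup bc$ with $|p_t-q_t|\le\delta$. The heart of the argument is a dichotomy on where $q_t$ lies. If $q_t\in ba$, then $|b-q_t|=R-|a-q_t|$, and combining this with $R\le|b-p_t|\le|b-q_t|+\delta$ forces $|a-q_t|\le\delta$, whence $t=|a-p_t|\le|a-q_t|+|q_t-p_t|\le2\delta$. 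Consequently, as soon as $t>2\delta$ the nearest side must be $bc$.

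In that remaining case $q_t\in bc$, I would use the perpendicularity bound again: $|b-q_t|\ge|b-p_t|-\delta\ge R-\delta$, so that $|q_t-c|=|b-c|-|b-q_t|\le2\Delta+\delta$ and hence $|p_t-c|\le|p_t-q_t|+|q_t-c|\le2\Delta+2\delta$. Reading off $|a-c|=t+|p_t-c|$ then gives $|a-c|\le t+2\Delta+2\delta$ for every $t>2\delta$. Letting $t\downarrow2\delta$ yields $|a-c|\le2\Delta+4\delta$; and if no such $t$ exists, i.e. $|a-c|\le2\delta$, the conclusion holds trivially because $\Delta\ge0$ (as $c\in\sigma$ forces $|b-c|\ge R$).

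I expect the only delicate point to be the threshold value $t=2\delta$, where the nearest side could a priori be either $ba$ or $bc$; I sidestep this by working with the strict inequality $t>2\delta$ and passing to the limit, rather than evaluating at $t=2\delta$ directly. Everything else is a routine combination of the triangle inequality with the defining property $|b-p_t|\ge R$ of the perpendicular foot $a$.
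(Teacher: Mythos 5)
Your proof is correct, and its core is the same as the paper's: apply $\delta$-thinness to the triangle $abc$ and exploit the fact that every point of the side $ac\subset\sigma$ is at distance at least $R$ from $b$, which forces any point of $ba$ that is $\delta$-close to $ac$ to lie within $\delta$ of $a$, and any point of $bc$ that is $\delta$-close to $ac$ to lie within $2\Delta+\delta$ of $c$. The difference is in how thinness is invoked. The paper picks a single point $t\in ac$ that is \emph{simultaneously} $\delta$-close to both $ba$ and $bc$, and chains the triangle inequality through its two projections $t_1\in ba$ and $t_2\in bc$; the existence of such a transition point does not follow verbatim from Definition~\ref{defDHT} but needs a connectedness argument (the closed sets $\{p\in ac:\ d(p,ba)\le\delta\}$ and $\{p\in ac:\ d(p,bc)\le\delta\}$ cover the connected segment $ac$ and contain $a$ and $c$ respectively, hence must intersect), which the paper leaves implicit. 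Your sliding-point dichotomy, with the limit $t\downarrow 2\delta$ and the separate trivial case $|a-c|\le2\delta$ (where you correctly note $\Delta\ge0$ because $c\in\sigma$), uses only the stated definition of thinness and is therefore slightly more self-contained; the price is a little extra bookkeeping at the threshold, which you handle properly by working with strict inequality and passing to the limit. Both arguments yield exactly the bound $2\Delta+4\delta$.
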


\begin{figure}[!ht]
\center{\includegraphics[width=0.5\linewidth]{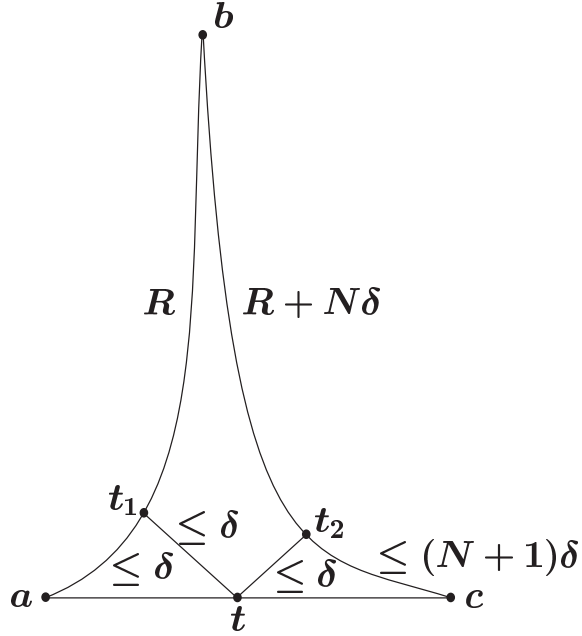}}
%\center{\includegraphics[width=0.5\linewidth]{orthPr1.eps}}
\caption{Illustration for Lemma~\ref{lemAC}.}
\label{orthPr1f}
\end{figure}

\begin{proof}
The triangle $abc$ (see Fig.~\ref{orthPr1f}) is $\delta$-thin by the
definition of a $\delta$-hyperbolic space. Hence, there exists a point
$t\in\sigma$ such that $d(t,ba)\le\delta$ and $d(a,bc)\le\delta$. Let $t_1$
and $t_2$ be the respective projections of $t$ on $ba$ and $bc$. By
hypothesis, $R$ is the minimum distance from $b$ to the points of $\sigma$.
Therefore, $R=|b-a|\le|b-t_1|+|t_1-t|\le|b-t_1|+\delta$ and
$R\le|b-t_2|+|t_2-t|\le|b-t_2|+\delta$. Hence, $|a-t_1|\le\delta$ and
$|c-t_2|\le 2\Delta+\delta$. By the triangle inequality, we obtain
$|a-c|\le|a-t_1|+|t_1-t|+|t-t_2|+|t_2-c|\le2\Delta+4\delta$.
\end{proof}

\begin{remark}
In particular, all the orthogonal projections of a point to a geodesic lie
in a segment of length $4\delta$.
\end{remark}

\begin{lemma}\label{orthPr}
In a $\delta$-hyperbolic space, let two points $b$ and $d$ be such that
$|b-d|=\Delta$. Let $\sigma$ be a geodesic and $a$ and $c$ be the respective
orthogonal projections of $b$ and $d$ on $\sigma$. Let $|a-b|>3\Delta+
6\delta$, and let $d(d,\sigma)>d(b,\sigma)$. Let two points $x_1\in ab$ and
$x_4\in cd$ be such that $2\Delta+5\delta<d(x_1,\sigma)=d(x_4,\sigma)<
|a-b|-(\Delta+2\delta)$. Then $|x_1-x_4|\le4\delta$ and $|a-c|\le8\delta$.
\end{lemma}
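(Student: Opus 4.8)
The plan is to exploit the thinness of the geodesic quadrilateral with vertices $a,b,d,c$, together with a simple but crucial observation about perpendiculars: along the perpendicular segment $ab$, the distance to $\sigma$ of a point $y$ equals its distance to the foot $a$. Indeed, if $|a-y|=s$ then $d(y,\sigma)\le|a-y|=s$ because $a\in\sigma$, while $|a-b|=d(b,\sigma)\le|b-y|+d(y,\sigma)=(|a-b|-s)+d(y,\sigma)$ forces $d(y,\sigma)\ge s$; hence $d(y,\sigma)=s$, and $a$ is also a foot of the perpendicular from $y$. The same holds along $cd$ with foot $c$. In particular $|x_1-a|=|x_4-c|=h$, where $h:=d(x_1,\sigma)=d(x_4,\sigma)$.

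First I would locate $x_1$. Choosing a diagonal (say $bc$) splits the quadrilateral $abdc$ into the two $\delta$-thin triangles $abc$ and $bcd$, so every point of the side $ab$ lies within $2\delta$ of $bd\cup dc\cup ca$. I then rule out two of these three sides for $x_1$. It cannot lie $2\delta$-close to $ca\subset\sigma$, since $d(x_1,\sigma)=h>2\Delta+5\delta>2\delta$. It cannot lie $2\delta$-close to $bd$ either: every $y\in bd$ satisfies $d(y,\sigma)\ge d(b,\sigma)-|y-b|\ge|a-b|-\Delta$, whereas $h<|a-b|-(\Delta+2\delta)$, so a point within $2\delta$ of $bd$ would have height at least $|a-b|-\Delta>h+2\delta$, a contradiction. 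This forces $x_1$ to be within $2\delta$ of $dc$. This case analysis, driven by the two height bounds in the hypothesis, is the heart of the argument and the step I expect to be the main obstacle, since it is exactly where the geometry must exclude the ``wrong'' nearby sides.

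Thus there is $x_1'\in cd$ with $|x_1-x_1'|\le2\delta$. By the perpendicular observation applied to $cd$, both $x_1'$ and $x_4$ are determined on $cd$ by their heights: $|x_1'-c|=d(x_1',\sigma)$ and $|x_4-c|=h$, and since $|d(x_1',\sigma)-h|\le|x_1'-x_1|\le2\delta$ we get $|x_1'-x_4|\le2\delta$. Therefore $|x_1-x_4|\le|x_1-x_1'|+|x_1'-x_4|\le4\delta$, which is the first claim.

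Finally, to bound $|a-c|$ I would feed $x_1$ into Lemma~\ref{lemAC}. Its distance to $\sigma$ is $h$, with foot $a$, while $c\in\sigma$ satisfies $h\le|x_1-c|\le|x_1-x_4|+|x_4-c|\le h+4\delta$. Writing $|x_1-c|=h+2\Delta'$ with $0\le\Delta'\le2\delta$, Lemma~\ref{lemAC} (with the roles of $b,R,c$ there played by $x_1,h,c$ here) yields $|a-c|\le2\Delta'+4\delta\le8\delta$. The only other point to keep in mind is that the stated hypotheses must be mutually consistent: the bound $|a-b|>3\Delta+6\delta$ is precisely what keeps the admissible height window $2\Delta+5\delta<h<|a-b|-(\Delta+2\delta)$ from collapsing, so that points $x_1,x_4$ of the required type exist; given such points, the argument above goes through verbatim.
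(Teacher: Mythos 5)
Your proof is correct and follows essentially the same route as the paper's: both arguments split the quadrilateral $abdc$ along the diagonal $bc$ into the two $\delta$-thin triangles $abc$ and $bcd$, transfer $x_1$ to a point of $cd$ at distance at most $2\delta$, compare positions along the perpendicular $cd$, and finish with Lemma~\ref{lemAC} to obtain $|a-c|\le8\delta$. The only cosmetic differences are that you carry out the exclusion of the sides $ca$ and $bd$ via height estimates $d(\cdot,\sigma)$ rather than via distances from $b$, and you make explicit the observation that distance to $\sigma$ parametrizes points along a perpendicular --- a fact the paper uses implicitly in its unexplained step $|a-x_1|=|c-x_4|$.
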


\begin{figure}[!ht]
\center{\includegraphics[width=0.5\linewidth]{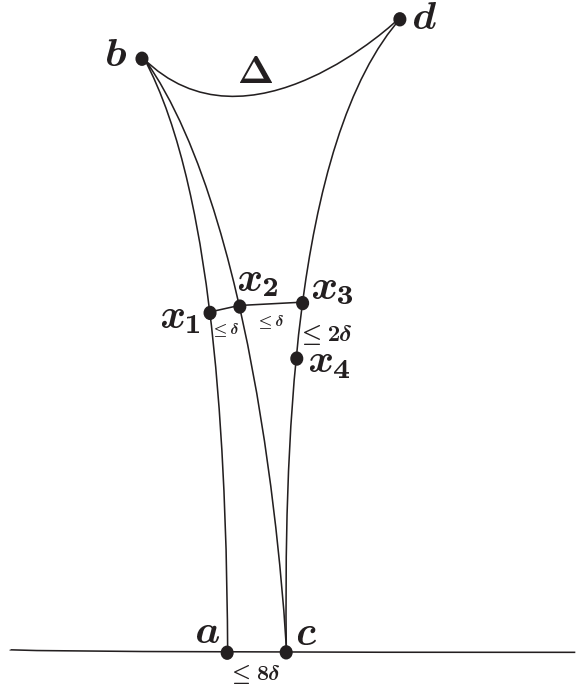}}
%\center{\includegraphics[width=0.5\linewidth]{orthPr2.eps}}
\caption{Illustration for Lemma~\ref{orthPr}.}
\label{orthPr2}
\end{figure}

\begin{proof}
(See Fig.~\ref{orthPr2}.) By the triangle inequality and because $cd$ is a
perpendicular to $\sigma$, $|c-d|\le|a-b|+|b-d|$, whence $|b-c|\le|c-d|+
|b-d|\le|a-b|+2|b-d|$. By Lemma~\ref{lemAC}, $|a-c|\le2\Delta+4\delta$. The
triangle $abc$ is $\delta$-thin, $|a-x_1|>|a-c|+\delta$. Therefore, by the
triangle inequality, $d(x_1,ac)>\delta$, and hence $d(x_1,bc)\le\delta$. Let
$x_2$ denote the point of $bc$ nearest $x_1$. Because the triangle $bcd$
is also $\delta$-thin and $|b-x_2|\ge|b-x_1|-|x_1-x_2|\ge\Delta+\delta$,
there exists a point $x_3\in cd$ such that $|x_3-x_3|\le\delta$. It
follows from the triangle $cx_1x_3$ that $|x_3-c|\ge|x_1-c|-2\delta\ge
|x_1-a|-2\delta$. On the other hand, because $x_5c$ is a perpendicular to
$\sigma$, $|x_3-c|\le |x_3-x_1|+|x_1-a|$. Now, $|a-x_1|=|c-x_4|$, and hence
$|x_4-x_3|\le2\delta$. Finally, we obtain the statement in the lemma:
$|x_1-x_4|\le4\delta$.

By the triangle inequality and because $d(x_1,\sigma)=d(x_4,\sigma)$, we
have $|x_1-c|\le|c-x_4|+|x_4-x_1|\le|a-x_1|+4\delta$. Hence, using
Lemma~\ref{lemAC}, we conclude that $|a-c|\le8\delta$.
\end{proof}

\begin{lemma}\label{orthPr1}
Let $\sigma$ be a geodesic segment, $a$ be a point not on $\sigma$, and $c$
be a projection of $a$ on $\sigma$. Let $b\in\sigma$ be arbitrary, and
let $d$ denote the projection of $b$ on $ac$. Then the $|c-d|\le2\delta$.
\end{lemma}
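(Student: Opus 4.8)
The plan is to exploit the perpendicularity of $[a,c]$ to $\sigma$ at $c$. First I would record the following elementary but crucial fact: since $c$ is a nearest point of $\sigma$ to $a$, the point $c$ is in fact a nearest point of $\sigma$ to \emph{every} point $p\in[a,c]$, so that $d(p,\sigma)=|p-c|$. Indeed, if $c'\in\sigma$ realizes $d(p,\sigma)$, then $|a-c'|\le|a-p|+|p-c'|=(|a-c|-|p-c|)+|p-c'|$, while $|a-c'|\ge d(a,\sigma)=|a-c|$; comparing gives $|p-c|\le|p-c'|=d(p,\sigma)$, and the reverse inequality is trivial. Applying this to the internal point $m$ of the triangle $abc$ lying on $[a,c]$ (at distance $(a,b)_c$ from $c$), thinness gives $d(m,[b,c])\le\delta$ and hence $d(m,\sigma)\le\delta$; since $d(m,\sigma)=|m-c|=(a,b)_c$, this yields the near-perpendicularity estimate $(a,b)_c\le\delta$.

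Next I would place $d$, the projection of $b$ on $[a,c]$, inside the $\delta$-thin triangle $abc$ and split into two cases according to which of the remaining sides $d$ is $\delta$-close to. If $d$ lies within $\delta$ of the side $[b,c]\subset\sigma$, then $|c-d|=d(d,\sigma)\le\delta$ by the fact above, and we are done. The substantive case is when $d$ is $\delta$-close to $[a,b]$.

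The main obstacle is precisely this last case, and here the near-perpendicularity is what saves the estimate. I would use two inputs: the minimality of the projection, $|b-d|\le|b-c|$ (taking the competitor $c\in[a,c]$), and the fact that $[a,c]$ and $[a,b]$ fellow-travel along their common portion of length $(b,c)_a=|a-c|-(a,b)_c$ near $a$. Writing $t=|c-d|$ so that $|a-d|=|a-c|-t$, the point $w\in[a,b]$ with $|d-w|\le\delta$ satisfies $|a-w|\approx|a-d|$, hence $|b-w|\approx|a-b|-|a-c|+t$; substituting $|b-d|\approx|b-w|$ into $|b-d|\le|b-c|$ and using $|a-b|=(a,c)_b+(b,c)_a$ together with $|b-c|=(a,c)_b+(a,b)_c$ collapses the $(a,c)_b$ terms and leaves $t\le 2(a,b)_c$. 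Combined with $(a,b)_c\le\delta$ from the first step, this gives $|c-d|\le2\delta$. The delicate point is to carry the thin-triangle and fellow-traveling comparisons as equalities up to the single additive $\delta$ that each costs, rather than accumulating slack, so that the final constant is the sharp $2\delta$ and not a larger multiple; tracking these through Lemma~\ref{lemAC}, applied to the perpendicular from $b$ to $[a,c]$, is how I would make the bookkeeping rigorous.
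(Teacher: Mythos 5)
There is a genuine gap, and it is quantitative: your strategy, even executed optimally, proves the lemma with $4\delta$ in place of the stated $2\delta$. The flaw is in your first step, the near-perpendicularity estimate $(a,b)_c\le\delta$. The paper's notion of a $\delta$-thin triangle (the one entering Definition~\ref{defDHT}) is the slim condition: every point of one side lies within $\delta$ of the \emph{union} of the other two sides. It does not say that the internal point $m\in[a,c]$ with $|m-c|=(a,b)_c$ is $\delta$-close specifically to $[b,c]$; a priori $m$ may be $\delta$-close only to $[a,b]$, and this alternative produces no contradiction (inserting $|m-w|\le\delta$, $w\in[a,b]$, into the triangle inequalities yields only the vacuous $|a-b|\ge|a-b|-2\delta$). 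The internal-point (``insize'') property you invoke is a different formulation of hyperbolicity, equivalent only up to a loss in the constant. What your own tools do give is this: the set of $p\in[a,c]$ that are $\delta$-close to $[a,b]$ and the set of those $\delta$-close to $[b,c]$ are closed and cover the connected segment $[a,c]$, hence meet at some $p_0$; by your (correct) observation that $d(p,\sigma)=|p-c|$ on $[a,c]$, this $p_0$ satisfies $|p_0-c|\le\delta$, whence $d(c,[a,b])\le2\delta$ and therefore only $(a,b)_c\le d(c,[a,b])\le2\delta$.

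The rest of your argument is actually cleaner than you make it, which localizes the damage. In your ``substantive case'' you need neither $w$, nor fellow-traveling, nor any bookkeeping: the plain triangle inequality gives $|b-d|\ge|a-b|-|a-d|$, and combined with the minimality $|b-d|\le|b-c|$ and $|a-d|=|a-c|-t$ this yields $t\le|a-c|+|b-c|-|a-b|=2(a,b)_c$ exactly, with no case analysis and no additive error. So the slack you worry about accumulating is entirely avoidable---but that only sharpens the reduction to $|c-d|\le2(a,b)_c$; the bottleneck is the Gromov-product bound itself, which under the paper's definition is $2\delta$, not $\delta$, leaving you with $|c-d|\le4\delta$. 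Citing Lemma~\ref{lemAC} does not repair this: that lemma concerns points of $\sigma$ at prescribed distance from $b$ and says nothing about the Gromov product at the foot $c$. The paper reaches $2\delta$ by a different decomposition: it applies thinness to the triangle $bcd$ rather than $abc$, uses the same connectedness argument to find $e$ on the perpendicular $bd$ with $|e-d|=d(e,ac)\le\delta$ and $d(e,bc)\le\delta$, and then invokes minimality of $|a-c|=d(a,\sigma)$ to write $|a-c|\le|a-d|+|d-e|+d(e,bc)\le|a-d|+2\delta$; only two thinness costs enter, which is exactly how the constant $2\delta$ is achieved.
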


\begin{proof}
By hypothesis, $bd$ minimizes the distance from any its points to $ac$, and
because the triangle $bcd$ is $\delta$-thin, there exists a point $e\in bd$
such that $d(e,ac)=|e-d|\le\delta$ and $d(e,bc)\le\delta$. Because $ac$
is a perpendicular to $\sigma$, $|a-c|\le|a-d|+|d-e|+d(e,bc)\le|a-d|+
2\delta$. Hence $|c-d|\le2\delta$.
\end{proof}

\begin{lemma}\label{orthTriangle}
As in the preceding lemma, let $\sigma$ be a geodesic segment, $a$ be a
point not on $\sigma$, $c$ be a projection of $a$ on $\sigma$, and $b$ be
some point on $\sigma$. Let $d$ denote a point on $ac$ such that
$|d-c|=\delta$ and $e$ denote a point on $bc$ such that $|e-c|=3\delta$.
Then
\begin{itemize}
\item $d(d,ab)\le\delta$, $d(e,ab)\le\delta$, $d(c,ab)\le2\delta$, and
\item the length of $ab$ differs from the sum of the lengths of the two
other sides by at most $8\delta$,
$$
|a-c|+|b-c|-2\delta\le|a-b|\le|a-c|+|b-c|+8\delta.
$$
\end{itemize}
\end{lemma}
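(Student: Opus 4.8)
The plan is to work entirely inside the geodesic triangle $abc$, which is $\delta$-thin by Definition~\ref{defDHT}, and to exploit two consequences of the hypothesis that $ac$ is a perpendicular from $a$ to $\sigma$. First, $bc$ is a subsegment of $\sigma$, so every point of $bc$ lies on $\sigma$. Second, for every $x\in ac$ one has $d(x,\sigma)=|x-c|$, since a nearer point of $\sigma$ would contradict the minimality $|a-c|=d(a,\sigma)$; in particular $d(d,\sigma)=\delta$ and $c$ is a foot of $d$ on $\sigma$. Note also that the inequality $|a-b|\le|a-c|+|b-c|$, hence the upper estimate in the second bullet, is just the triangle inequality, so the real content lies in the three projection bounds and in the lower estimate $|a-c|+|b-c|-2\delta\le|a-b|$.

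I would first dispatch $d(e,ab)\le\delta$. Since $e$ lies on the side $bc$, $\delta$-thinness gives $d(e,ab\cup ac)\le\delta$, and the witness cannot lie on $ac$: a point $e'\in ac$ with $|e-e'|\le\delta$ would satisfy $|e'-c|=d(e',\sigma)\le|e'-e|\le\delta$, whence $|e-c|\le|e-e'|+|e'-c|\le2\delta$, contradicting $|e-c|=3\delta$ (this is exactly why the threshold $3\delta$ is chosen). Thus $d(e,ab)\le\delta$. The bound $d(d,ab)\le\delta$ is more delicate, and this is the main obstacle. Thinness only yields $d(d,ab\cup bc)\le\delta$, and the analogous rule-out fails here: because $d(d,\sigma)=\delta$ and $bc\subset\sigma$, a witness on $bc$ sitting within $\delta$ of $d$ is perfectly consistent with the perpendicularity, so no contradiction arises. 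To force the witness onto $ab$ I must use that $ac$ is a genuine perpendicular, not merely that $abc$ is thin. Concretely, applying Lemma~\ref{lemAC} to $a$, its foot $c$, and $b\in\sigma$ (writing $|a-b|=|a-c|+2\Delta'$) gives $|c-b|\le|a-b|-|a-c|+O(\delta)$, that is, the Gromov product $(a,b)_c$ is $O(\delta)$; equivalently the branch point of the comparison tripod of $abc$ lies within $O(\delta)$ of $c$. Since $d$ lies at distance $\delta$ from $c$ along $ac$, it then sits at or beyond this branch region, so its tripod image already lies on the $ab$-part and thinness returns $d(d,ab)\le\delta$. The last projection bound is then immediate: $d(c,ab)\le|c-d|+d(d,ab)\le\delta+\delta=2\delta$.

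For the remaining lower estimate I would let $q$ be a point of $ab$ nearest to $c$, so $|c-q|=d(c,ab)$, and use $|a-q|\ge|a-c|-|c-q|$ together with $|q-b|\ge|b-c|-|c-q|$ to get $|a-b|=|a-q|+|q-b|\ge|a-c|+|b-c|-2\,d(c,ab)$; feeding in the bound on $d(c,ab)$ closes the inequality up to the stated constant. The crux throughout is the passage from the crude thin-triangle union bound to a one-sided bound against $ab$ alone, i.e. showing that $ab$ passes $O(\delta)$-close to $c$; this is precisely where perpendicularity enters, and pinning down the sharp numerical constants ($\delta$, $\delta$, $2\delta$) rather than the $O(\delta)$ produced by the plain comparison of the triangle $abc$ is the part that demands careful bookkeeping with Lemmas~\ref{lemAC} and~\ref{orthPr1} and the remark that all feet of a point on $\sigma$ lie in a segment of length $4\delta$.
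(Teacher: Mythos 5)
The genuine gap is in your key step $d(d,ab)\le\delta$. From Lemma~\ref{lemAC} you do correctly get $(a,b)_c\le 2\delta$, so the branch point of the comparison tripod lies within $2\delta$ of the vertex corresponding to $c$ --- but $d$ lies only at distance $\delta$ from $c$, so nothing forces $d$ to sit ``at or beyond'' the branch point: it may lie strictly inside the $c$-leg, in which case its tripod image is \emph{not} on the image of $ab$, and your conclusion does not follow as stated. Worse, even where the tripod picture does apply, the paper's hyperbolicity is slim triangles (Definition~\ref{defDHT}), and converting slim-triangle thinness into tripod/insize thinness costs a multiplicative constant; so ``thinness returns $d(d,ab)\le\delta$'' is not available by this route --- at best you get $d(d,ab)\le C\delta$ for some universal $C>1$, and then $d(c,ab)\le(C+1)\delta$, etc. Since you derive the $c$-bound and the lower length estimate from the $d$-bound, the loss propagates through everything that follows.

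The missing idea is a continuity (limiting) argument, and it is exactly the mechanism you already used for $e$. For any $x\in ac$ with $|x-c|>\delta$, a thinness witness on $bc$ is impossible: since $bc\subset\sigma$ and $d(x,\sigma)=|x-c|$ by perpendicularity, a point of $bc$ within $\delta$ of $x$ would give $|x-c|\le\delta$, a contradiction. Hence $d(x,ab)\le\delta$ for every such $x$, and since $x\mapsto d(x,ab)$ is continuous along $ac$, letting $|x-c|\downarrow\delta$ gives $d(d,ab)\le\delta$ with the sharp constant and no tripod machinery --- this is precisely the paper's (tersely stated) argument. With that repair the rest of your plan closes: your treatment of $e$ is correct and in fact cleaner than the paper's (you get $|x-c|\le 2\delta$ for points of $bc$ lying $\delta$-close to $ca$ directly from perpendicularity, where the paper invokes Lemma~\ref{orthPr1} and gets $3\delta$, leaving a boundary case); the bound $d(c,ab)\le 2\delta$ follows; and your lower estimate $|a-b|\ge|a-c|+|b-c|-2\,d(c,ab)\ge|a-c|+|b-c|-4\delta$ is a legitimate, more direct route than the paper's projection bookkeeping --- it misses the displayed $-2\delta$, but the paper's own computation does too, so that discrepancy lies in the paper's loose constants, not in your argument.
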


\begin{proof}
The triangle $abc$ is $\delta$-thin. Therefore, obviously, $d(d,ab)\le
\delta$ (the distance from a point of $ac$ to $ab$ is a continuous
function). We take a point $x\in bc$ such that $d(x,ca)\le\delta$.
Using Lemma~\ref{orthPr1}, we obtain $|b-x|+d(x,ca)\ge|b-c|-2\delta$, and
hence $|c-x|\le d(x,ca)+2\delta\le3\delta$.

We now let $d_1$ and $e_1$ denote the respective projections of $d$ and $e$
on $ab$. Then by the triangle inequality, we have
\begin{itemize}
\item $|a-d|-\delta\le|a-d_1|\le|a-d|+\delta$,
\item $|b-e|-\delta\le|b-e_1|\le|b-e|+\delta$, and
\item $0\le|d_1-e_1|\le|d_1-d|+|d-c|+|c-e|+|e-e_1|\le6\delta$.
\end{itemize}
Combining all these inequalities, we obtain the second point in the lemma.
\end{proof}

\begin{lemma}\label{orthPlane}
Let $\sigma$ be a geodesic and $a$ and $b$ be two points not on $\sigma$.
Further, let $a$ and $b$ have a common projection $c$ on $\sigma$. Let $d$
be a point of $\sigma$ and $c_1$ be the projection of $d$ on $ab$. Then
$$
|d-c|\le |d-c_1|+6\delta.
$$
\end{lemma}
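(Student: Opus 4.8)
The plan is to bound $|d-c|$ by comparing two estimates of the sum $|a-d|+|b-d|$. Since $c$ is the common orthogonal projection of $a$ and $b$ onto $\sigma$ and $d\in\sigma$, the segments $ac$ and $bc$ are perpendiculars to $\sigma$, so Lemma~\ref{orthTriangle} applies to each of the triangles $acd$ and $bcd$ (with $d$ playing the role of the point lying on $\sigma$). This gives the lower bounds $|a-d|\ge|a-c|+|c-d|-2\delta$ and $|b-d|\ge|b-c|+|c-d|-2\delta$, hence
$$|a-d|+|b-d|\ge|a-c|+|b-c|+2|c-d|-4\delta.$$

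For the matching upper bound I would exploit that $c_1$ is the point of the geodesic segment $ab$ nearest $d$. The triangle inequality gives $|a-d|\le|a-c_1|+|c_1-d|$ and $|b-d|\le|b-c_1|+|c_1-d|$, and since $c_1$ lies on the geodesic $ab$ the distances add, $|a-c_1|+|c_1-b|=|a-b|$. Summing the two inequalities yields
$$|a-d|+|b-d|\le|a-b|+2|c_1-d|.$$

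Combining the two displays and using the plain triangle inequality $|a-b|\le|a-c|+|b-c|$ makes the terms $|a-c|$ and $|b-c|$ cancel, leaving $2|c-d|-4\delta\le 2|c_1-d|$, that is $|d-c|\le|d-c_1|+2\delta$. In particular this gives the asserted bound $|d-c|\le|d-c_1|+6\delta$.

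The only point that needs care is the verification of the hypotheses of Lemma~\ref{orthTriangle} in the first step: its conclusion requires $c$ to be a projection of the off-$\sigma$ vertex and the remaining vertex $d$ to lie on $\sigma$, both of which hold here by assumption. Beyond that the argument is entirely triangle inequalities, and the genuine structural content is the observation that the \emph{common} projection is exactly what lets the side lengths $|a-c|$ and $|b-c|$ telescope out, so that no large multiple of $\delta$ accumulates. I therefore expect no serious obstacle; the whole proof lives in setting up the two sub-triangles $acd$ and $bcd$ and in using that $c_1$ lies on $ab$.
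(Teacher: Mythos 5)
Your proof is correct, and it takes a genuinely different route from the paper's. You double-count the sum $|a-d|+|b-d|$: from below by applying the second assertion of Lemma~\ref{orthTriangle} to the triangles $acd$ and $bcd$ (a legitimate application, since $c$ is by hypothesis the projection of each of $a$ and $b$ on $\sigma$ and $d\in\sigma$), and from above by routing through $c_1$ and using the exact additivity $|a-c_1|+|c_1-b|=|a-b|$, which holds precisely because $c_1$ lies on the geodesic \emph{segment} $ab$; cancelling $|a-c|+|b-c|$ with the plain triangle inequality then yields $|d-c|\le|d-c_1|+2\delta$, which is sharper than the stated $6\delta$. The paper argues quite differently: it introduces an auxiliary point $e\in bc$ at distance $\delta$ from $c$, uses thinness of $bcd$ to find $e_1\in bd$ near $e$, projects $e_1$ onto $dc_1$ and $bc_1$ inside the triangle $dbc_1$, and splits into two cases, the second of which (the paths $cee_3$, $cgg_3$ and the triangle $ce_3g_3$) is only sketched. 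Your argument avoids this case analysis entirely, uses only that $c_1$ is \emph{some} point of the segment $ab$ (so in fact it bounds $|d-c|$ by $|d-x|+2\delta$ for every $x\in ab$), and makes transparent exactly where the hypothesis discussed in Remark~\ref{remarkOrthPlane} enters: the additivity along $ab$ fails for a point on a complete geodesic through $a$ and $b$ lying outside the segment. One caveat: your constant $2\delta$ inherits the $-2\delta$ appearing in the lower bound of Lemma~\ref{orthTriangle} as stated; if that lower bound is only valid with a weaker constant $-k\delta$ (the paper's own proof of that lemma appears to deliver roughly $k=6$ rather than $k=2$), your argument goes through verbatim and gives $|d-c|\le|d-c_1|+k\delta$, still within the claimed $6\delta$.
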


\begin{figure}[!ht]
\center{\includegraphics[width=0.5\linewidth]{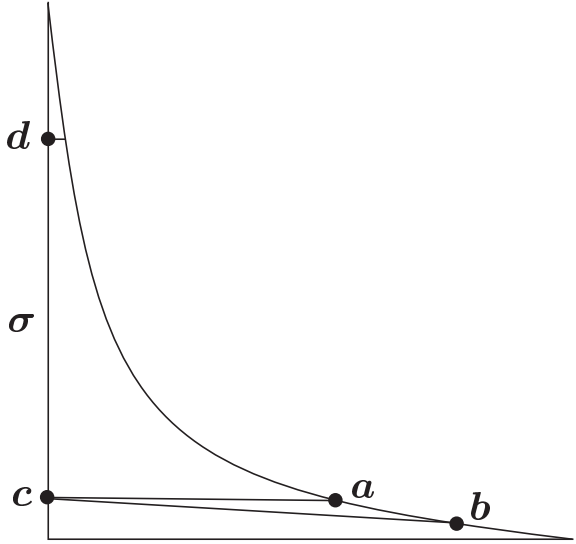}}
%\center{\includegraphics[width=0.5\linewidth]{remarkOrthPlane.eps}}
\caption{Illustration for Remark~\ref{remarkOrthPlane}.}
\label{remarkOrthPlaneFig}
\end{figure}

\begin{remark}\label{remarkOrthPlane}
Lemma~\ref{orthPlane} deals with a geodesic segment. The statement is not
true for a complete geodesic passing through $a$ and $b$, as can be seen
from Fig.~\ref{remarkOrthPlaneFig}.
\end{remark}

\begin{figure}[!ht]
\center{\includegraphics[width=0.5\linewidth]{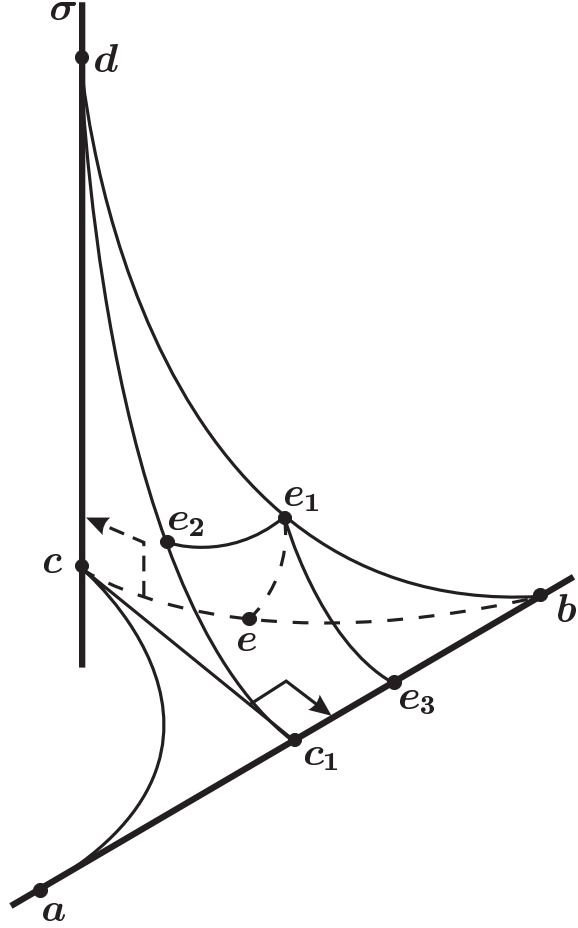}}
%\center{\includegraphics[width=0.5\linewidth]{orthPlane.eps}}
\caption{Illustration for Lemma~\ref{orthPlane}.}
\label{orthPlaneFig}
\end{figure}

\begin{proof}
We take a point $e\in bc$ such that $|c-e|=\delta$ and consider the
triangle $bcd$ (see Fig.~\ref{orthPlaneFig}). Because $bc$ is a
perpendicular to $dc$, $d(e,bd)\le\delta$. Let $e_1$ denote a projection of
$e$ on $bd$. Let $e_2$ and $e_3$ be the respective projections of $e_1$ on
the geodesic segments $dc_1$ and $bc_1$. Because the triangle $dbc_1$ is
$\delta$-thin, either $|e_1-e_2|\le\delta$ or $|e_1-e_3|\le\delta$.

{\bf I.} If $|e_1-e_2|\le\delta$, then $|d-c|\le|c-e|+|e-e_1|+
|e_1-e_2|+|e_2-d|\le|d-c_1|+3\delta$.

{\bf II.} If $|e_1-e_2|>\delta$, then the length of the path $cee_3$ is
at most $3\delta$. We apply the same arguments to $ad$ (we assume that this
is possible; otherwise, we could apply the first case to it). We obtain the
points $g$, $g_1$, and $g_3$ and the length of the path $cgg_3$ is also at
most $3\delta$. If neither of these paths intersects $cc_1$, then its length
does not exceed $6\delta$ (which follows from consideration of the triangle
$ce_3g_3$).
\end{proof}

\begin{lemma}\label{thinTriangle}
Let $E$ be a $\delta$-hyperbolic metric space and $abc$ be a triangle in
$E$. Then the diameter of the set $S$ of points of the side $ab$ such that
distance to $bc$ and $ac$ does not exceed $2d$ is not greater than
$C(d+\delta)$, where $C$ is a constant.
\end{lemma}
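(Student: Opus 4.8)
The plan is to parametrize the side $ab$ by arclength from $a$ and to show that membership in $S$ forces this parameter into an interval of length at most $4d$; since $ab$ is a geodesic, the length of that interval is exactly the diameter of $S$. Write $p_s$ for the point of $ab$ at distance $s$ from $a$, so $0\le s\le|a-b|$ with $|a-p_s|=s$ and $|b-p_s|=|a-b|-s$. It then suffices to bound the admissible values of $s$ from above and below.

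The key estimate is a pointwise lower bound on the distance from a vertex to the opposite side in terms of a Gromov product, and it needs no hyperbolicity whatsoever. For any $q$ on the geodesic $bc$ we have $|b-q|+|q-c|=|b-c|$, so the triangle inequality gives both $|a-q|\ge|a-b|-|b-q|$ and $|a-q|\ge|a-c|-|c-q|$. Averaging these two inequalities and using $|b-q|+|q-c|=|b-c|$ to combine the $q$-dependent terms, we obtain
$$
|a-q|\ \ge\ \tfrac12\bigl(|a-b|+|a-c|-|b-c|\bigr)=(b,c)_a
\qquad\text{for every }q\in bc .
$$
Hence if $p_s\in S$, choosing $q\in bc$ with $|p_s-q|=d(p_s,bc)\le 2d$ yields $s=|a-p_s|\ge|a-q|-2d\ge(b,c)_a-2d$.

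For the upper bound I run the identical argument from the vertex $b$ against the side $ac$: for every $r\in ac$ one gets $|b-r|\ge(a,c)_b$, so if $p_s\in S$ and $r\in ac$ realizes $d(p_s,ac)\le 2d$, then $|a-b|-s=|b-p_s|\ge|b-r|-2d\ge(a,c)_b-2d$. Using the elementary identity $|a-b|-(a,c)_b=(b,c)_a$, this rearranges to $s\le(b,c)_a+2d$. Combining the two bounds, every $p_s\in S$ satisfies $(b,c)_a-2d\le s\le(b,c)_a+2d$, so the parameters of points of $S$ lie in an interval of length $4d$ and therefore $\operatorname{diam}S\le 4d\le C(d+\delta)$ with $C=4$.

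The one point that must be gotten right—and the place where one might otherwise spend $\delta$'s needlessly—is the averaging trick producing the clean \emph{pointwise} inequality $|a-q|\ge(b,c)_a$ valid for all $q\in bc$: it is this pointwise form (rather than an infimal one) that lets me localize $s$ without assuming any monotonicity of $s\mapsto d(p_s,bc)$, which can fail in a general space. An alternative route, closer to the thin-triangles picture, would instead note that $p_s$ lies within $\delta$ of $ac\cup bc$ and track on which side its nearest point sits as $s$ increases; that argument also works but naturally introduces additive $\delta$ terms, which explains the shape $C(d+\delta)$ of the stated bound. In this view the role of $\delta$-hyperbolicity is only to guarantee (via thinness) that $S$ is a genuine nonempty ``center'' region of the triangle; the diameter estimate itself is purely metric.
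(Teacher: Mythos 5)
Your proof is correct, and it takes a genuinely different route from the paper. The paper stays inside the thin-triangle machinery: it fixes a ``central'' point $x\in ab$ lying within $\delta$ of both other sides, takes an arbitrary point $y$ of $S$, and compares the projections of $x$, $y$ (and auxiliary points $x'$, $y'$ on $ac$) onto $bc$, using Lemma~\ref{orthPr} and a case analysis on where those projections land; this produces a bound of the form $|x-y|\le 24\delta+d$, with hyperbolicity used at every step. You instead localize every point of $S$ directly: the pointwise inequality $|a-q|\ge(b,c)_a$ for all $q\in bc$ (and its analogue viewed from $b$) follows purely from the triangle inequality together with additivity of distance along the geodesic $bc$, and it pins the arclength parameter of any point of $S$ into the window $[(b,c)_a-2d,\,(b,c)_a+2d]$. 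This gives $\operatorname{diam}S\le 4d$, which is sharper than the stated $C(d+\delta)$ (no $\delta$ contribution at all) and valid in any metric space whose triangle sides are geodesic segments; as you observe, hyperbolicity only serves to guarantee that $S$ is nonempty, which is indeed how the lemma is used later (in Lemma~\ref{lem14} the points $p_1$ and $q$ are separately shown to be close to both sides, and only the diameter bound is invoked). Two small points to tidy, neither affecting correctness: the infimum $d(p_s,bc)$ need not be attained in a general metric space, so one should take $q$ with $|p_s-q|\le d(p_s,bc)+\varepsilon$ and let $\varepsilon\to 0$; and the identity $|a-b|-(a,c)_b=(b,c)_a$ merits its one-line verification, since it is what makes the two one-sided bounds meet in a window of width $4d$.
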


\begin{proof}
Let $x$ be a point of $ab$ such that $d(x,bc)\le\delta$ and $d(x,ac)\le
\delta$ and $y$ be a point of $ab$ such that $d(y,bc)\le d$ and $d(y,ac)<d$.
Without loss of generality, we assume that $y\in(a,x)$. Because the triangle
$abc$ is $\delta$-thin, one of these two distances does not exceed $\delta$.

We first assume that $d(y,ac)\le\delta$. Let $x'$ and $y'$ be points of $ac$
such that $d(x,x')\le\delta$ and $d(y,y')\le\delta$. We let $t$, $t'$, $s$,
and $s'$ denote the respective projections of $x$, $x'$, $y$, and $y'$ on
$bc$. Because $x't'$ is a perpendicular to $bc$, $|x'-t'|\le|x'-x|+|x-t|\le
2\delta$, and hence $|t-t'|\le4\delta$. If $y$ and $y'$ are sufficiently far
from $bc$, i.e., if $d\ge9\delta$, then $|s-s'|\le6\delta$ by
Lemma~\ref{orthPr}. Otherwise, we can give a rough estimate by the triangle
inequality: $|s-s'|\le|s-y|+|y-y'|+|y'-s'|\le19\delta$. Hence, in any case,
$|s-s'|\le19\delta$. We consider two cases.

If $s$ is in the segment $[b,t']$, then by applying the triangle inequality
several times, we obtain
$$
|b-y|\le|b-s|+|s-y|\le|b-t'|+|s-y|\le
|b-x|+|x-t|+|t-t'|+|s-y|\le|b-x|+5\delta+d.
$$
And because $|b-y|=|b-x|+|x-y|$, we have $|x-y|\le5\delta+d$.

The same arguments we apply if $s\in[t',c]$. We merely note that we can
replace $y$ with $y'$ and $t$ with $t'$ with respective errors less than
$\delta$ and $19\delta$:
$$
|c-y'|\le|c-s'|+|s'-y'|\le|c-s'|+|s'-y'|\le
|c-s|+19\delta+|s-y|+\delta\le|c-t'|+20\delta+d.
$$
Now, because $|c-t'|\le|c-x'|+|x'-t'|\le|c-x'|+2\delta$, we have
$$
|c-x'|+|x'-y'|=|c-y'|\le|c-x'|+22\delta+d.
$$
Finally, $|x-y|\le|y-y'|+|y'-x'|+|x-x'|\le24\delta+d$.

The case $d(y,bc)\le\delta$ is treated identically with $d$ and $\delta$
interchanged.
\end{proof}

\section{Quasi-geodesics and $\Delta$-length}

\begin{definition}\label{defQIS}
A map $f:E\to F$ between metric spaces is a {\em$(\lambda,c)$-quasi-isometry}
if
$$\frac1{\lambda}|x-y|_E-c\le|f(x)-f(y)|_F\le\lambda |x-y|_E+c
$$
for any two points $x$ and $y$ of $E$.
\end{definition}

\begin{definition}
A $(\lambda,c)$-quasi-geodesic in $F$ is a $(\lambda,c)$-quasi-isometry from
a real interval $I=[0,l]$ to $F$.
\end{definition}

Let $\gamma:I\to F$ be a curve. We assume that the interval $I=[x_0,x_n]$ of
length $|I|=l$ gives the parameterization of the quasi-geodesic $\gamma$. We
take a subdivision $T_n=(x_0,x_1,\dots,x_n)$ and let $y_i$, $i=0,1,\dots,n$,
denote $\gamma(x_i)$. The {\em mesh} of $T_n$ is
$d(T_n)=\min_{0<i\le n}|y_i-y_{i-1}|$.

\begin{definition}[$\Delta$-length]
Let $\gamma:I\to F$ be a curve. The value
$$
L_\Delta(\gamma)=\sup_{T_n:d(T_n)\ge\Delta}\sum_{i=1}^n|y_i-y_{i-1}|
$$
is called the {\em $\Delta$-length} of the quasi-geodesic $\gamma$.
\end{definition}

We note that the values of the $\Delta$-length and the classical length are
the same for a geodesic.

\begin{lemma}\label{delta1}
Let $\gamma:I\to F$ be a $(\lambda,c)$-quasi-geodesic. For $\Delta\ge2c$,
$$
L_\Delta(\gamma)\le2\lambda l.
$$
\end{lemma}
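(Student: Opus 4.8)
The plan is to bound an arbitrary admissible subdivision edge-by-edge and then pass to the supremum. First I would fix any subdivision $T_n=(x_0,\dots,x_n)$ of $I=[0,l]$ with mesh $d(T_n)\ge\Delta$, and write $y_i=\gamma(x_i)$. The defining mesh condition gives $|y_i-y_{i-1}|\ge\Delta\ge2c$ for every $i$, so the additive constant obeys $c\le\tfrac12|y_i-y_{i-1}|$.

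The key step is to convert the affine quasi-isometry estimate into a purely multiplicative bound on each edge. Applying the upper inequality of Definition~\ref{defQIS} and absorbing $c$ by the observation above, I expect
$$
|y_i-y_{i-1}|\le\lambda|x_i-x_{i-1}|+c\le\lambda|x_i-x_{i-1}|+\tfrac12|y_i-y_{i-1}|,
$$
which rearranges to $|y_i-y_{i-1}|\le2\lambda|x_i-x_{i-1}|$ for each $i$.

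Finally I would sum over $i$; since the $x_i$ are ordered along $I$, the increments $|x_i-x_{i-1}|$ telescope to the total length $l$, giving $\sum_{i=1}^n|y_i-y_{i-1}|\le2\lambda\sum_{i=1}^n|x_i-x_{i-1}|=2\lambda l$. As this bound is uniform over all admissible $T_n$, taking the supremum produces $L_\Delta(\gamma)\le2\lambda l$.

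I do not anticipate a genuine obstacle. The only delicate point is the absorption trick, which relies essentially on the hypothesis $\Delta\ge2c$: it is precisely this lower bound on the mesh that lets one dominate the additive error $c$ by half the length of each edge and discard it, so that on the edges appearing in an admissible subdivision the $(\lambda,c)$-quasi-isometry behaves like a $2\lambda$-Lipschitz map.
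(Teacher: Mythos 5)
Your proposal is correct and follows essentially the same route as the paper: bound each edge by the quasi-isometry upper inequality, use the mesh condition $\Delta\ge2c$ to absorb the additive constant $c$ and obtain $|y_i-y_{i-1}|\le2\lambda|x_i-x_{i-1}|$, then sum (the increments of the $x_i$ telescoping to $l$) and take the supremum. The only cosmetic difference is that you absorb $c$ into $\tfrac12|y_i-y_{i-1}|$ on the image side, whereas the paper first deduces $c\le\lambda|x_i-x_{i-1}|$ and absorbs it into the parameter side; both are the same absorption idea.
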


\begin{proof}
By the definition of the $\Delta$-length, $\Delta\le|y_i-y_{i-1}|\le
\lambda|x_i-x_{i-1}|+c$. Hence, because $\Delta\ge2c$, we obtain
$|x_i-x_{i-1}|\ge(\Delta-c)/\lambda\ge c/\lambda$.

Now, by the definition of a quasi-geodesic (and a quasi-isometry in
particular), we have
$$
\sup_{T_n}\sum_i|y_i-y_{i-1}|\le\sup_{T_n}\sum_i(\lambda|x_i-x_{i-1}|+c)\le
\sup_{T_n}\sum_i2\lambda |x_i-x_{i-1}|=2\lambda l,
$$
where the last equality follows because the sum of $|x_i-x_{i-1}|$ for every
subdivision of the interval $I$ is exactly equal to the length of $I$.
\end{proof}

\begin{lemma}\label{qgLen}
Let $\gamma:I\to F$ be a $(\lambda,c)$-quasi-geodesic. Let $R\ge c$ be the
distance between the endpoints of $\gamma$, and let $\Delta\ge2c$. Then
$L_\Delta(\gamma)\le4\lambda^2R$.
\end{lemma}

\begin{proof}
By the definition of a quasi-isometry, $l/\lambda-c\le R\le\lambda l+c$.
Hence, $l\le\lambda (R+c)$. And by Lemma~\ref{delta1}, $L_\Delta(\gamma)\le
2\lambda^2(R+c)$. In particular, $L_\Delta(\gamma)\le4\lambda^2R$ for
$R\ge c$.
\end{proof}

The next lemma allows replacing arbitrary quasi-geodesics with continuous
ones.

\begin{lemma}\label{contQG}
Let $\gamma$ be a $(\lambda,c)$-quasi-geodesic, and let $\Delta\ge c$. Let
$T={t_0,t_1,\dots,t_n}\subset\gamma$ be the set of points on $\gamma$ such
that $T$ gives the $\Delta$-length value $L_\Delta$.
\begin{enumerate}
\item[1.]Then the curve $\tilde{\gamma}$ consisting of the geodesic segments
$[t_i,t_{i+1}]$, $i=0,1,\dots,n-1$, is a $(\lambda,12\Delta+3c)$-geodesic
with the (classical) length $L_\Delta$.
\item[2.]Let $y$ and $y'$ be points of $\tilde{\gamma}$ such that
$d(y,y')\ge6\Delta+c$. Let $\tilde{\gamma}_0$ be the part of
$\tilde{\gamma}$ between $y$ and $y'$. Then the (classical) length of
$\tilde{\gamma}_0$ is not greater than $L_\Delta(\tilde{\gamma}_0)\le
4\lambda^2(R+6\Delta)$.
\end{enumerate}
\end{lemma}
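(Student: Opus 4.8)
The plan is to fix once and for all a subdivision $T=\{t_0,\dots,t_n\}$ of $\gamma$ that realises the supremum $L_\Delta(\gamma)$ and, among all such subdivisions, to take one that is \emph{maximal for inclusion}: no further point of $\gamma$ can be inserted while keeping the mesh $\ge\Delta$. Such a $T$ exists because the number of division points is bounded: each gap has length $\ge\Delta$, so $n\Delta\le\sum_i|t_i-t_{i-1}|\le L_\Delta(\gamma)\le 2\lambda l$ by Lemma~\ref{delta1}, and we may pass to a maximiser with the largest possible $n$. The whole argument then rests on one quantitative fact, which I would isolate and prove first.

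The key claim is the segment bound $|t_{i-1}-t_i|\le 2\Delta+c\le 3\Delta$ (using $\Delta\ge c$). I would prove it by contradiction. Write $g(x)=|\gamma(x)-t_{i-1}|$, so that $g(x_{i-1})=0$ and $g(x_i)=|t_{i-1}-t_i|$. Although $\gamma$ need not be continuous, $g$ is \emph{coarsely} continuous: $|g(x)-g(x')|\le|\gamma(x)-\gamma(x')|\le\lambda|x-x'|+c$, so $g$ cannot jump across a value by more than $c$. Hence if $|t_{i-1}-t_i|>2\Delta+c$ there is a parameter $x^\ast\in(x_{i-1},x_i)$ with $g(x^\ast)\in[\Delta,\Delta+c]$; then $|\gamma(x^\ast)-t_{i-1}|\ge\Delta$ and, by the triangle inequality, $|\gamma(x^\ast)-t_i|\ge|t_{i-1}-t_i|-g(x^\ast)\ge\Delta$, so inserting $x^\ast$ keeps the mesh $\ge\Delta$ while not decreasing the sum, contradicting maximality of $T$. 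I expect this ``coarse intermediate value'' step to be the main obstacle, precisely because $\gamma$ may be discontinuous: it is the additive constant $c$ of the quasi-isometry that bounds the jumps of $g$ and lets the argument go through. (One also records here that the restriction of $T$ to any subinterval $[x_p,x_q]$ is maximal for $\gamma|_{[x_p,x_q]}$, since otherwise one could splice in a better subdivision and increase $L_\Delta(\gamma)$.)

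For part~1, the classical length of $\tilde\gamma$ is $\sum_i|t_i-t_{i-1}|=L_\Delta$ by construction, each $[t_{i-1},t_i]$ being a geodesic. I would parametrise $\tilde\gamma$ over the original interval $I$, moving proportionally to the parameter along each geodesic piece, so that $\tilde\gamma(x_i)=t_i=\gamma(x_i)$. To verify the quasi-isometry inequalities for $u\le v$ with $\tilde\gamma(u)\in[t_{a-1},t_a]$, $\tilde\gamma(v)\in[t_{b-1},t_b]$, I would treat the same-segment case directly (the restriction to one geodesic is $1$-Lipschitz, and since $|u-v|$ does not exceed the parameter length of that segment the speed distortion costs only an additive $c$, giving a $(\lambda,c)$-bound there), and the different-segment case by comparing with the bracketing vertices through the quasi-isometry bound for $\gamma$, $\frac{1}{\lambda}(v-u)-c\le|\gamma(x_{a-1})-\gamma(x_b)|\le\lambda(v-u)+c$, absorbing the two end pieces via $|\tilde\gamma(u)-t_{a-1}|,|\tilde\gamma(v)-t_b|\le M:=3\Delta$. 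This produces an upper additive error $3c$ and a lower additive error $c+2M\le c+6\Delta$, both dominated by $12\Delta+3c$, which yields the asserted $(\lambda,12\Delta+3c)$-quasi-geodesic.

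For part~2, let $y\in[t_{j-1},t_j]$ and $y'\in[t_{k-1},t_k]$ with $d(y,y')=R\ge 6\Delta+c$. The length of $\tilde\gamma_0$ is at most the length of $\tilde\gamma$ from the outer vertex $t_{j-1}$ to $t_k$, namely $\sum_{i=j}^{k}|t_i-t_{i-1}|$, because $|y-t_{j-1}|\le D_j\le M$ and $|t_k-y'|\le D_k\le M$. By the restriction remark this sum equals $L_\Delta(\gamma|_{[x_{j-1},x_k]})$, whose endpoints are $t_{j-1},t_k$. The segment bound and the hypothesis give $|t_{j-1}-t_k|\ge R-2M\ge c$ and $|t_{j-1}-t_k|\le R+2M\le R+6\Delta$, so Lemma~\ref{qgLen} applies and yields $L_\Delta(\gamma|_{[x_{j-1},x_k]})\le 4\lambda^2|t_{j-1}-t_k|\le 4\lambda^2(R+6\Delta)$. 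Thus the length of $\tilde\gamma_0$ is at most $L_\Delta(\gamma|_{[x_{j-1},x_k]})\le 4\lambda^2(R+6\Delta)$, as claimed. The only delicate bookkeeping I anticipate here is the junction-mesh condition in the splicing step and the inequality $|t_{j-1}-t_k|\ge c$, both of which are exactly calibrated by the threshold $6\Delta+c$ in the hypothesis.
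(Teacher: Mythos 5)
Your overall strategy coincides with the paper's: a realizing partition that is maximal for inclusion, the bound $|t_{i-1}-t_i|\le2\Delta+c\le3\Delta$ on segment lengths via a coarse intermediate-value argument, affine reparameterization of $\tilde\gamma$ over the original interval $I$, and part~2 by splicing plus Lemma~\ref{qgLen} applied to the sub-quasi-geodesic between the outer vertices $t_{j-1},t_k$. In fact your treatment of the segment bound and of the restriction/splicing property is more careful than the paper's, which only remarks that ``the gaps on a quasi-geodesic cannot be greater than $c$'' and uses $L(\tilde\gamma_0)\le L_\Delta(\gamma_0)$ silently. Part~2 of your argument is correct as written.

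There is, however, a genuine flaw in the different-segment case of part~1: the displayed inequality $|\gamma(x_{a-1})-\gamma(x_b)|\le\lambda(v-u)+c$ is false, and the quasi-isometry bound for $\gamma$ cannot justify it. That bound controls $|t_{a-1}-t_b|$ by $\lambda(x_b-x_{a-1})+c$, and since $x_{a-1}\le u\le v\le x_b$, the parameter separation $x_b-x_{a-1}$ \emph{exceeds} $v-u$, by as much as $|x_a-x_{a-1}|+|x_b-x_{b-1}|$; these parameter lengths are only controlled by $\lambda(3\Delta+c)$ each (not by $3\Delta$), so pushing the outer vertices through the upper quasi-isometry inequality honestly costs an additive error of order $\lambda^2(3\Delta+c)$, which destroys the claimed $(\lambda,12\Delta+3c)$ bound. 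A concrete counterexample to your inequality: $\gamma(x)=\lambda x$ in $\R$, $t_i=i\Delta$, $x_i=i\Delta/\lambda$ (a legitimate maximal realizing partition), $u=x_a$, $v=x_{b-1}$; then $|t_{a-1}-t_b|=\lambda(v-u)+2\Delta>\lambda(v-u)+c$ since $\Delta\ge c$. Note that your \emph{lower} bound through the outer vertices is fine, because there $x_b-x_{a-1}\ge v-u$ works in the right direction. The repair is exactly what the paper does: for the upper bound, route through the \emph{inner} vertices $t_a,t_{b-1}$, whose parameter separation satisfies $x_{b-1}-x_a\le v-u$, giving
$$
|\tilde\gamma(u)-\tilde\gamma(v)|\le6\Delta+|t_a-t_{b-1}|\le\lambda(v-u)+c+6\Delta,
$$
which is well within $12\Delta+3c$. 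With this one step replaced (and your error bookkeeping ``upper additive error $3c$'' adjusted accordingly), your proof is complete and matches the paper's.
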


\begin{figure}[!ht]
\center{\includegraphics[width=0.5\linewidth]{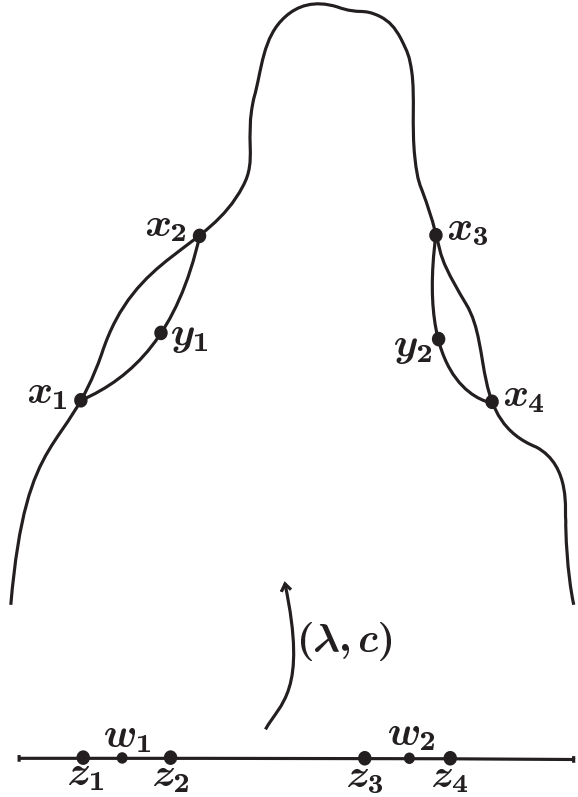}}
%\center{\includegraphics[width=0.5\linewidth]{lemma-ex-qg.eps}}
\caption{Construction of the continuous arc $\tilde{\gamma}$ from the
quasi-geodesic $\gamma$.}
\label{imageLexqg}
\end{figure}

\begin{proof}
We first note that for every $i=0,1,\dots,n-1$, the length of the interval
$|[t_i,t_{i+1}]|\le3\Delta$. Indeed, if $|[t_i,t_{i+1}]|>3\Delta$, then we
can add a point $t'_i$ to the partition $T$. Such a point exists because
the gaps on a quasi-geodesic cannot be greater than $c$.

We assume that $\gamma$ is parameterized by an interval $I$; $t_i^{-1}\in I$
are the parameters of $t_i$, $i=0,1,\dots,n$ (see Fig.~\ref{imageLexqg}).
Let $[t_i^{-1},t_{i+1}^{-1}]$ be the affine parameterization of the geodesic
segments $[t_i,t_{i+1}]$. Then the conditions for being a
$(\lambda,4c)$-geodesic are satisfied automatically for the points of the
same segment.

To simplify the notation, we let $[x_1,x_2]$ and $[x_3,x_4]$ denote two
different intervals of $\tilde{\gamma}$ and $[z_1,z_2]$ and $[z_3,z_4]$
denote their parameters. We take two points $y_1\in[x_1,x_2]$ and
$y_2\in[x_3,x_4]$, where $w_1$ and $w_2$ are their parameters. By the
triangle inequality and by the definition of a quasi-isometry,
$$
|y_1-y_2|\le|x_2-x_3|+|y_1-x_2|+|y_2-x_3|\le
|x_2-x_3|+6\Delta\le\lambda|z_2-z_3|+c+6\Delta.
$$
Similarly, we obtain the lower bound
$$
|y_1-y_2|\ge|x_2-x_3|-|y_1-x_2|-|y_2-x_3|\ge
|x_2-x_3|-6\Delta\ge\frac1{\lambda}|z_2-z_3|-c-6\Delta.
$$

By the definition of a quasi-isometry, $|z_k-z_{k+1}|\le
\lambda(|x_k-x_{k+1}|+c)\le\lambda(3\Delta+c)$ with $k=1,3$. Hence,
$$
|w_1-w_2|-2\lambda(3\Delta+c)\le|z_2-z_3|\le|w_1-w_2|.
$$
Therefore,
$$
\frac1{\lambda}|w_1-w_2|-\frac{2\lambda(3\Delta+c)}{\lambda}-
6\Delta-c\le|y_1-y_2|\le\lambda |w_1-w_2|+6\Delta+c.
$$
Consequently, $\tilde{\gamma}$ is a quasi-geodesic with the constants
$\lambda$ and $12\Delta+3c$ and statement~1 in the lemma is proved.

To prove statement~2, we need merely note that if $|y_1-y_2|\ge6\Delta+c$,
then $c\le|x_1-x_4|\le|y_1-y_2|+6\Delta$ by the triangle inequality. The
left-hand inequality allows applying Lemma~\ref{qgLen} to the part
$\gamma_0$ between $x_1$ and $x_4$ of the initial quasi-geodesic $\gamma$,
and we use the right-hand part to obtain the upper bound,
$$
L(\tilde{\gamma}_0)\le L_\Delta(\gamma_0)\le4\lambda^2(R+6\Delta).
$$
\end{proof}

\section{Exponential contraction}

\begin{lemma}[Exponential contraction]\label{expC}
Let $\Delta>0$. In a geodesic $\delta$-hyperbolic space $E$, let $\gamma$ be
a connected curve at a distance not less than $R\ge\Delta+58\delta$ from a
geodesic $\sigma$. Let $L_\Delta$ be the $\Delta$-length of $\gamma$. Let
$r=\lfloor(R-\Delta-58\delta)/19\delta\rfloor19\delta$. Then the length of
the projection of $\gamma$ on $\sigma$ is not greater than
$$
\max\biggl(\frac{4\delta}
{\Delta}e^{-Kr/\delta}(L_\Delta+\Delta),8\delta\biggr).
$$
In other words,
\begin{itemize}
\item if $R\le\Delta+58\delta+(\delta/K)\ln\bigl((L_\Delta+\Delta)/
2\Delta\bigr)$, then the length of the projection of $\gamma$ on $\sigma$ is
not greater than $(4\delta/\Delta)e^{-Kr/\delta}(L_\Delta+\Delta)$;
\item otherwise, it is not greater than $8\delta$.
\end{itemize}
\end{lemma}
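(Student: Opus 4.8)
The plan is to exploit the exponential convergence of perpendiculars toward $\sigma$: two points that sit at distance $\ge R$ from $\sigma$ and are joined by a short arc have feet that are very close, and this closeness improves geometrically as one slides down the perpendiculars toward $\sigma$. First I would discretize $\gamma$. Using the $\Delta$-length, choose sample points $p_0,\dots,p_N$ on $\gamma$ with consecutive distances in, say, $[\Delta,2\Delta]$ and $\sum_j|p_j-p_{j+1}|=L_\Delta$; since each gap is at least $\Delta$, this forces $N\le (L_\Delta+\Delta)/\Delta$. For each $p_j$ fix a perpendicular $p_ja_j$ to $\sigma$ with foot $a_j\in\sigma$. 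Because $\gamma$ is connected and, by Lemma~\ref{lemAC}, feet of nearby points stay close, the length of the projection of $\gamma$ on $\sigma$ is controlled by $\sum_j|a_j-a_{j+1}|$ together with the within-segment spread; thus it suffices to bound the distance between the feet of two consecutive samples and sum.

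The core is a one-step contraction estimate. Introduce levels $D_k=R-19k\delta$ for $k=0,1,\dots,m$, where $m=r/(19\delta)$, so that $D_m=R-r\ge\Delta+58\delta$ and every level stays far from $\sigma$. On each perpendicular let $p_j^{(k)}$ be the point at distance $D_k$ from $\sigma$ (its foot is still $a_j$, since a subsegment of a shortest path is shortest). I would prove that sliding down one level halves the horizontal width: if $u,v$ lie at common distance $D_k$ from $\sigma$ on two perpendiculars and $w=|u-v|$, then the corresponding points $u',v'$ at distance $D_{k+1}$ satisfy $|u'-v'|\le\max(\tfrac12 w,\,C\delta)$ for a universal $C$. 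This is exactly where the thin-triangle machinery enters: applying Lemma~\ref{orthPr} at the intermediate equal-height slices (which gives the $4\delta$ and $8\delta$ closeness of slices and feet), together with Lemma~\ref{orthTriangle} and Lemma~\ref{thinTriangle}, forces the geometric decay, and the integer $19$ is calibrated precisely so that one descent of $19\delta$ multiplies the width by at most $\tfrac12$ once it exceeds the $O(\delta)$ floor. With this calibration $K=\ln2/19$, and since $r$ is an integer multiple $19\delta\cdot m$ of $19\delta$ we obtain $2^{-m}=e^{-(\ln2)m}=e^{-Kr/\delta}$.

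Assembling, I would first bring the samples down to the common top level $D_0=R$ (which only shrinks distances, so consecutive widths start at $\le 2\Delta$), then iterate the one-step estimate $m$ times. As long as the width stays above the floor it is multiplied by $2^{-m}=e^{-Kr/\delta}$ per consecutive pair, and a short final projection from level $D_m\ge\Delta+58\delta$ down to $\sigma$ (bounded by Lemma~\ref{orthPr}) changes feet-distances by at most an absolute multiple of $\delta$; this yields a per-pair bound of order $4\delta\,e^{-Kr/\delta}$. Summing over the $N\le(L_\Delta+\Delta)/\Delta$ consecutive pairs produces $\frac{4\delta}{\Delta}e^{-Kr/\delta}(L_\Delta+\Delta)$. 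When instead the contraction drives the width below the floor before reaching $\sigma$ — equivalently, when $R$ exceeds the threshold $\Delta+58\delta+(\delta/K)\ln\bigl((L_\Delta+\Delta)/2\Delta\bigr)$ — every pair is already at the floor and the whole projection collapses into a segment of length at most $8\delta$; this gives the two displayed cases and the outer maximum.

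The main obstacle is the one-step halving with honest constants: verifying that a single $19\delta$ descent genuinely multiplies the horizontal width by at most $\tfrac12$ (not merely by some constant $<1$) requires chaining the thin-triangle inequalities from Lemmas~\ref{orthPr}, \ref{orthTriangle} and~\ref{thinTriangle} while tracking the additive $\delta$-errors so that they contribute only the $O(\delta)$ floor and never spoil the geometric factor. A secondary technical point is the passage from the discrete feet-distances $|a_j-a_{j+1}|$ to the actual length of the projected image of the merely connected curve $\gamma$; this is handled by Lemma~\ref{lemAC}, which keeps the feet of nearby points within $4\delta$, so that no extra length hides between consecutive samples.
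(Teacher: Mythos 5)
There is a genuine gap, and it sits exactly where you flagged the ``main obstacle'': the one-step \emph{per-pair} halving is false, and no amount of constant-chasing can repair it. Take a metric tree (which is $\delta$-hyperbolic for every $\delta>0$) and two points $u,v$ at the same distance $D$ from $\sigma$ with $|u-v|=w$; their perpendiculars branch at depth $w/2$, so descending both by $19\delta$ gives $|u'-v'|=w-38\delta$ exactly (an \emph{additive} decrease), which exceeds $w/2$ as soon as $w>76\delta$. The same additive decay occurs in $\mathbb{H}^2$ whenever $\delta\ll w\ll D$. Since your consecutive samples start at mutual distance up to $2\Delta$, and $\Delta$ may be enormous compared with $\delta$ (in the Morse-lemma application $\Delta=2c$), your one-step estimate fails from the very first level: multiplicative contraction of a fixed pair only begins once its width is already of order $\delta$, which is precisely why Lemma~\ref{orthPr} is stated for points at mutual distance $\Delta'$ with a descent calibrated to $\Delta'$ and outputs the fixed quantity $4\delta$, not a factor. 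Worse, even if one grants your floored claim $|u'-v'|\le\max(\tfrac12w,C\delta)$, the assembly cannot produce the lemma: a per-pair width can never be pushed below the $O(\delta)$ floor, so summing over the $N\approx(L_\Delta+\Delta)/\Delta$ \emph{fixed} pairs yields at best a bound of order $N\delta=\delta(L_\Delta+\Delta)/\Delta$, with no exponential factor at all. Your phrase ``per-pair bound of order $4\delta e^{-Kr/\delta}$'' contradicts your own floor, and likewise ``every pair is already at the floor'' gives a projection of length $\sim N\delta$, not $8\delta$.

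The idea you are missing --- and the actual engine of the paper's proof --- is that the halving is a statement about the \emph{total} projected length, obtained by \emph{re-grouping the subdivision at every level}, not by following fixed pairs. After the initial descent to a common level (where Lemma~\ref{orthPr} makes consecutive gaps drop from $\Delta$ to $\le4\delta$, producing the factor $4\delta/\Delta$), one selects from the current chain a coarser sub-collection $\bar x_0,\bar x_1,\dots$ whose consecutive gaps lie in $[8\delta,16\delta]$, descends that sub-collection by $19\delta=16\delta+3\delta$, and applies Lemma~\ref{orthPr} again to get gaps $\le4\delta$: because each gap was $\ge8\delta$ before the step and is $\le4\delta$ after, the total length of the chain at the new level is at most half the old one. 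The pairs change from level to level, so the $4\delta$ floor is always being compared against the $8\delta$ lower bound built into the regrouping and never accumulates; this is what makes $2^{-k}=e^{-Kr/\delta}$ legitimate. The floor enters only once, at the very end: either the whole chain has width $<8\delta$ (and then the projection is $\le8\delta$ by Lemma~\ref{lemAC}), or one performs a single bounded-error final descent from height $\approx58\delta$. Your discretization, your use of Lemmas~\ref{lemAC} and~\ref{orthPr}, and your calibration $K=\ln2/19$ all match the paper, but without the level-by-level coarsening the argument cannot close.
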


\begin{figure}[!ht]
\center{\includegraphics[width=0.5\linewidth]{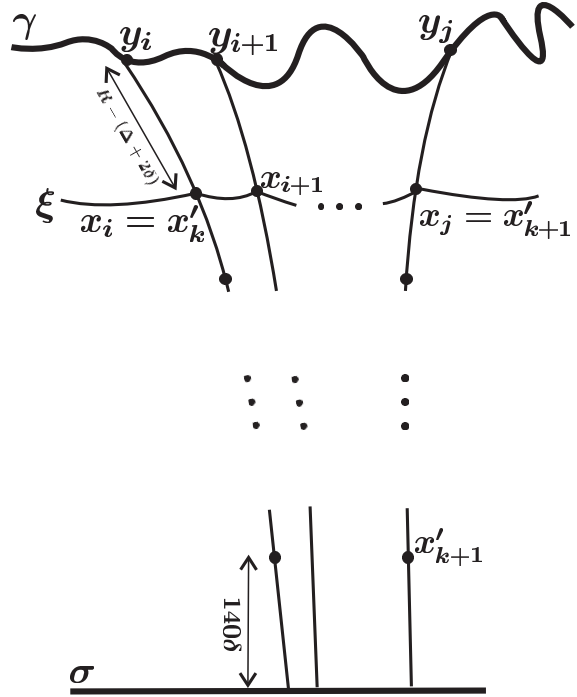}}
%\center{\includegraphics[width=0.5\linewidth]{expContr.eps}}
\caption{Exponential contraction of the length of a curve $\gamma$ under
projection on a geodesic $\sigma$.}
\label{expContr}
\end{figure}

% TODO: to remove completely?
% Here we need to discuss the meaning of the lemma for $\delta\ll1$,
% in particular when $\delta=0$.
% To begin with, we take a tree, which is a $0$-hyperbolic space.
% Obviously, every projection of a connected curve $\gamma$ on a geodesic
% $\sigma$ that does not intersect $\gamma$ is a single point, and the length
% of the projection is therefore zero. We now consider the statement in the
% lemma with $\delta\to0$: $r\to R-\Delta$, $K\to\infty$,
% $(8\delta/7\Delta)e^{-Kr}L_\Delta\to0$, and $8\delta\to0$.
% This fits nicely with the consequence of Gromov's theorem that
% hyperbolic spaces with small $\delta$ are close to trees.

\begin{proof}
Let $y_0,y_1,\dots,y_n$ be points on $\gamma$ such that $|y_i-y_{i-1}|=
\Delta$ for $i=1,2,\dots,n-1$, $|y_n-y_{n-1}|\le\Delta$, and $y_0$ and $y_n$
are the endpoints of $\gamma$. Let $y_k$ be the point of this set that is
nearest $\sigma$. We take a perpendicular from $y_k$ to $\sigma$ and a point
$x_k$ on it with $|y_k-x_k|=\Delta+3\delta$. Now, on the perpendiculars from
all other points $y_i$, we take points $x_i$ such that $d(x_i,\sigma)=
d(x_k,\sigma)$ (see Fig.~\ref{expContr}). By Lemma~\ref{orthPr},
$|x_i-x_{i-1}|\le4\delta$ for $i=1,2,\dots,n$. Therefore,
$$
\sum_{i=1}^n|x_i-x_{i-1}|\le n4\delta\le n\Delta\frac{4\delta}{\Delta}\le
\frac{4\delta}{\Delta}(L_{\Delta}+\Delta).
$$

We set $\bar{x}_0=x_0$ and $\bar{x}_{n^1}=x_n$ and select points
$\bar{x}_i\in\{x_1,x_2,\dots,x_{n-1}\}$ such that $8\delta\le|x_i-x_{i-1}|\le
16\delta$. For each $i=0,1,\dots,n^1$, we choose a perpendicular from
$\bar{x}_i$ to $\sigma$, move $\bar{x}_i$ along it a distance
$16\delta+3\delta=19\delta$ toward $\sigma$, and obtain $x^1_i$. By
Lemma~\ref{orthPr}, $|x^1_i-x^1_{i-1}|\le4\delta$ and
$$
\sum_{i=1}^{n^1}|x^1_i-x^1_{i-1}|\le n^14\delta\le
\frac12\sum_{i=1}^{n^1}|\bar{x}_i-\bar{x}_{i-1}|\le
\frac12\sum_{i=1}^n|x_i-x_{i-1}|\le
\frac12\frac{4\delta}{\Delta}(L_{\Delta}+\Delta).
$$

We can continue such a process while the distance from the set of points
$\{x^m_i,\;i=0,1,\dots,n^m\}$ to $\sigma$ is not less than $19\delta$ and
$|x^m_0-x^m_{n^m}|\ge8\delta$. After $k$ steps, we have
$$
\sum_{i=1}^{n^k}|x^k_i-x^k_{i-1}|\le
\frac1{2^k}\frac{4\delta}{\Delta}(L_{\Delta}+\Delta)=
\frac{4\delta}{\Delta}e^{-((\ln2)/19\delta)(19\delta k)}
(L_\Delta+\Delta).
$$
We set $r=19\delta k$ and $K=(\ln2)/19$. We need $8\delta\le
(4\delta/\Delta)e^{-Kr/\delta}(L_\Delta+\Delta)$ and hence $r\le(\delta/K)
\ln\bigl((L_\Delta+\Delta)/2\Delta\bigr)$. Now, if the distance between the
projections of the endpoints $|x^m_0-x^m_{n^m}|$ is not less than $8\delta$
at some step $m$, then we use Lemma~\ref{orthPr} to do the last projection
on $\sigma$, and its length does not exceed $8\delta$. Otherwise, we must do
the last descent to the distance $55\delta$ using Lemma~\ref{orthPr} (the
estimate for the projection on a geodesic with $\Delta=16\delta$ gives the
necessary distance from the set of points to the geodesic to be greater than
$3*16\delta+6\delta=54\delta$) and intervals of a length not less than
$8\delta$ contract to intervals of a length not more than $\delta$, and we
hence have a contraction factor of unity at the last step.
\end{proof}

\section{Quantitative version of the Morse lemma}

We are now ready to prove our main result. In a $\delta$-hyperbolic space
$E$, any $(\lambda,c)$-quasi-geodesic $\gamma$ belongs to an $H$-neighborhood
of a geodesic $\sigma$ connecting its endpoints, where the constant $H$ depends
only on the space $E$ (in particular, on the constant $\delta$) and the
quasi-isometry constants $\lambda$ and $c$.

\subsection{Attempts}
To motivate our method, we describe a sequence of arguments yielding sharper
and sharper estimates. We start with the proof in~\cite{ghys}, Chapter~5.1,
Theorem~6 and Lemma~ 8, where the upper bound $H\le\lambda^8c^2\delta$ was
obtained (up to universal constants, factors of the order $\log_2(\lambda c
\delta)$). The first weak step in this proof is replacing a
$(\lambda,c)$-quasi-geodesic with a discrete $(\lambda',c)$-quasi-geodesic
$\gamma'$ parameterized by an interval $[1,2,\dots,l]$ of integers, where
$\lambda'\sim\lambda^2c$. For a suitable $R\sim\lambda'^2$, we take an arc
$x_ux_v$ of $\gamma'$ and introduce a partition of that arc $x_u,x_{u+N},
x_{u+2N},\dots,x_v$ for some well-chosen $N\sim\lambda'$. The approximation
of a $\delta$-hyperbolic space by a tree (see~\cite{ghys}, Chapter~2.2,
Theorem~12.ii) is used to obtain an estimate of the form
$|y_{u+iN}-y_{u+(i+1)N}'|\le c'\sim\ln\lambda'$. By the triangle
inequality, $|x_u-x_v|\le|x_u-y_u|+|y_u-y_{u+N}|+\dots+|y_v-x_u|\le
2(R+\lambda')+(N^{-1}|u-v|+1)c'$. On the other hand,
$\lambda'^{-1}|u-v|\le|x_u-x_v|$. Combining these two inequalities, we obtain an
estimate for $|u-v|$ and hence for a distance from any point of the arc
$x_ux_v$ to the point $x_u$. The second weak step in this argument is in the
estimate of the length of projections, which can be improved significantly.

Another proof was given in~\cite{Alonso}. It allows obtaining the estimate
$\lambda^2H_{\am}$, where $H_{\am}$ is the constant of the anti-Morse lemma
(see Section~\ref{antiMorseSection}) and is given by the equation
$H_{\am}\simeq\ln\lambda+\ln H_{\am}$.\footnote{Be careful while
reading~\cite{Alonso} because a slightly different definition of
quasi-geodesics is used there with $\lambda_1=\lambda^2$;
cf.~Lemma~\ref{qgLen}.} It is very close to an optimal upper bound but still
not sharp. Also we need to notice that the sharp estimate for $H_{\am}\simeq\ln\lambda$. The proof uses the
notion of ``exponential geodesic divergence.''

\begin{definition}\label{divGeod}
Let $F$ be a metric space. We call $e:\N\to\R$ a \emph{divergence function}
for the space $F$ if for any point $x\in F$ and any two geodesic segments
$\gamma=(x,y)$ and $\gamma'=(x,z)$, the length of a path $\sigma$ from
$\gamma(R+r)$ to $\gamma'(R+r)$ in the closure of the complement of a ball
$B_{R+r}(x)$ (i.e., in $\overline{X\setminus B_{R+r}(x)}$) is not greater
than $e(r)$ for any $R,r\in\N$ such that $R+r$ does not exceed the lengths of
$\gamma$ and $\gamma'$ if $d(\gamma(R),\gamma'(R))>e(0)$.
\end{definition}

The divergence function is exponential in a hyperbolic space. The next step
is to prove the anti-Morse lemma. The authors of~\cite{Alonso} take a point
$p$ of the geodesic $\sigma$ that is the distant from the quasi-geodesic
$\gamma$ and construct a path $\alpha$ between two points of $\gamma$ such
that $\alpha$ is in the complement of the ball of radius $d(p,\gamma)$ with
the center $p$. Finally, they compare two estimates of the length: one
estimate follows from the hypothesis that $\alpha$ is a quasi-geodesic, and
the other is given by the exponential geodesic divergence. To prove the
Morse lemma, they take a (connected) part $\gamma_1$ of $\gamma$ that
belongs to the complement of the $H_{\am}$-neighborhood of the geodesic
$\sigma$, and they show that the length of $\gamma_1$ does not exceed
$2\lambda^2H_{\am}$ by the definition of a quasi-geodesic. In~\cite{Alonso},
they also use another definition of a quasi-geodesic, which is less general
than our definition because, in particular, it assumes that a quasi-geodesic
is a continuous curve. Consequently, some technical work is needed to
generalize their results.

To improve these bounds, we use Lemma~\ref{expC} (exponential contraction)
instead of exponential geodesic convergence and Lemma~\ref{qgLen}, which do
not require discretization as in~\cite{ghys} and provide a much more precise
estimate for a length of a projection. We can then take $R=\ln\lambda$ and
obtain $H\le O(\lambda^2\ln\lambda)$ by a similar triangle inequality.

Below, we prove the Morse and anti-Morse lemmas independently. We only
mention that arguments in~\cite{Alonso} can be used to deduce the optimal
bound for the Morse lemma from the anti-Morse lemma. We can also obtain an
optimal upper bound for $H$ from Lemma~\ref{lem11}.

We now sketch the proof of a stronger result (but still not optimal):
$H\le O(\lambda^2\ln^*\lambda)$, where $\ln^*\lambda$ is the minimal
number $n$ of logarithms such that
$\underbrace{\ln\dots\ln}_n\lambda\le1$.

The preceding argument is used as the initial step. It allows assuming that
the endpoints $x$ and $x'$ of $\gamma$ satisfy $|x-x'|\le O(\ln\lambda)$.
%
% Let $y_u$ and $y_v$ be two points of $\gamma$ such that the distance
% of the part of % $\gamma$ between them from $\sigma$ is not less than
% $\delta\ln\lambda$. Let $L_{uv}$ denote the $\Delta$-length of this part.
% From Lemma~\ref{expC}, the definition of a quasi-isometry, the triangle
% inequality $|y_u-y_u'|+|y_u'-y_v'|+|y_v'-y_v|\ge|y_u-y_v|$, and
% Lemma~\ref{qgLen}, we conclude that $|y_u-y_v|\le
% C_1(c,\delta)\lambda^2\ln\lambda$.
% Here, we project $\gamma$ on $\sigma$ as in the preceding proof.
%
Then comes an iterative step. We prove that if $xx'$ is an arc on $\gamma$
and $|x-x'|=d_1$, then there exist two points $y$ and $y'$ at distance at
most $C_2(c,\delta)\lambda^2$ from a geodesic $\sigma_1$ connecting $x$ and
$x'$ such that $d_2:=|y-y'|\le C_3(c,\delta)\ln d_1$. Indeed, we choose a
point $z$ of the arc $xx'$ that is farthest from $\sigma_1$ and let
$\sigma'$ denote a perpendicular from $z$ to $\sigma_1$. If all points of
the arc $xx'$ (on either side of $z$) whose projection on $\sigma'$ is at a
distance $\le\lambda^2$ from $\sigma_1$ are at a distance not less than
$\ln d_1$ from $\sigma'$, then Lemma~\ref{expC} implies that the length of
the arc is much greater than $\lambda^2\ln d_1$, contradicting the
quasi-geodesic assumption. Hence, there are points $y$ and $y'$ that are
near $\sigma'$. We can arrange that their projections on $\sigma'$ are near
each other, which yields $|y-y'|\le\ln d_1$. We apply this relation several
times starting with $d_1=C_1(c,\delta)\ln\lambda$ until $d_i\le1$ for some
$i=\ln^*\lambda$.

In summary, we use two key ideas to improve the upper bound of $H$:
exponential contraction and a consideration of a projection of $\gamma$ on a
different geodesic $\sigma'$.

\subsection{Proof of the Morse lemma}
\label{PrfMrsLem}
We use the same ideas to prove the quantitative version of the Morse lemma,
but we should do it more accurately. Let $\gamma$ be a
$(\lambda,c)$-quasi-geodesic  in a $\delta$-hyperbolic space $E$, and let
$\sigma$ be a geodesic segment connecting its endpoints. We prove that
$\gamma$ belongs to an $H$-neighborhood of $\sigma$, where
\begin{equation}
H=4\lambda^2\biggl(78c+\biggl(78+
\frac{133}{\ln2}e^{157\ln2/28}\biggr)\delta\biggr).
\end{equation}

\begin{remark}
It is easy to give an example where $H=\frac{\lambda^2c}2$ (see
Section~\ref{lastPr2}).
\end{remark}

Indeed, a path that goes back and forth along a geodesic segment of length
$\lambda^2c$ in a tree is a $(\lambda,c)$-quasi-geodesic (see
Section~\ref{lastPr} for details).

\begin{figure}[!ht]
\center{\includegraphics[width=0.5\linewidth]{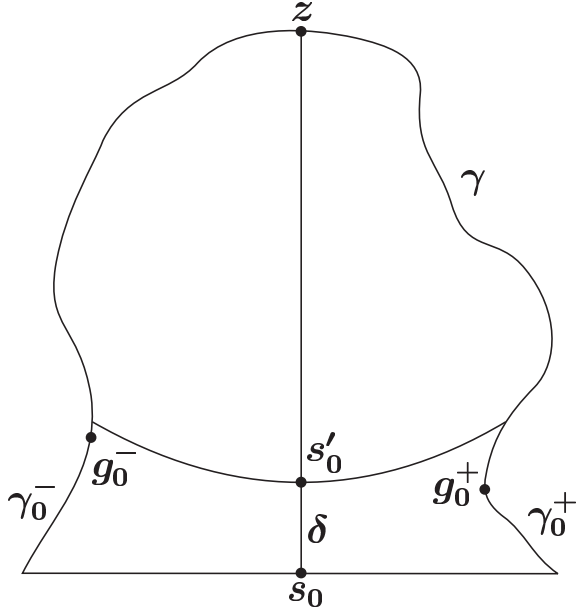}}
%\center{\includegraphics[width=0.5\linewidth]{pfThm.eps}}
\caption{Illustration of proof of Theorem~\ref{MainTheorem}}
\label{pfThm}
\end{figure}

\begin{proof}[Proof of Theorem~\ref{MainTheorem}]
Applying Lemma~\ref{contQG} to the quasi-geodesic $\gamma$ with $\Delta=2c$,
we obtain a continuous $(\lambda,27c)$-quasi-geodesic $\tilde{\gamma}$. By
Lemma~\ref{qgLen}, $\gamma$ belongs to a
$4\lambda^2\cdot6c{=}24\lambda^2c$-neighborhood of $\tilde{\gamma}$.
{\bf Hereafter, we consider only the $(\lambda,27c)$-quasi-geodesic
$\tilde{\gamma}$, which for brevity is denoted simply by $\gamma$, and we
set $\tilde{c}=27c$.} The classical length of the part of this quasi-geodesic
between two points separated by a distance $R$ does not exceed
$4\lambda^2(R+\tilde{c})$.

We introduce the following construction for subdividing the quasi-geodesic
$\gamma$. We let $z$ denote the point of our quasi-geodesic that is farthest
from $\sigma$. Let $\sigma_0=\sigma$ be the geodesic connecting the endpoints of
$\gamma$. Let $\sigma'_0$ be the geodesic minimizing the distance between
$z$ and $\sigma_0$ (because $\sigma_0$ is a geodesic segment, $\sigma'_0$
is not necessarily perpendicular to the complete geodesic carrying
$\sigma_0$). Let $s_0$ denote the point of intersection of $\sigma_0$ and
$\sigma'_0$. Let $s'_0$ be the point of $\sigma'_0$ such that the length of
the segment $[s_0,s'_0]$ is equal to $\delta$. We consider the set of points
of $\gamma$ whose projections on $\sigma'_0$ belong to the segment
$[s_0,s'_0]$. The point $z$ separates this set into two subsets $\gamma^+_0$
and $\gamma^-_0$ (see Fig.~\ref{pfThm}).

Let $d^\pm_0$ denote the minimal distance of points of $\gamma^\pm_0$ to
$\sigma'_0$. We also introduce the following notation:
\begin{itemize}
\item $d_0=d^+_0+d^-_0+\delta$;
\item $\gamma_1$ is a connected component of
$\gamma\setminus(\gamma^+_0\cup\gamma^-_0)$ containing $z$ and is also a
quasi-geodesic with the same constants and properties as $\gamma$;
\item $\sigma_1$ is a geodesic connecting the endpoints of the sub-quasi-geodesic
$\gamma_1$;
\item $L_1$ is the length of $\gamma_1$.
\end{itemize}

%$$\sum_{j=i}^ne^{-\frac{K}{2}(d_j-76\delta)}\le \frac{\lambda^2(d_i+2c)}{2}$$

Applying the same idea to the curve $\gamma_1$, the same point $z$, and the
geodesic $\sigma_1$, we obtain the geodesic $\sigma'_1$, the parts
$\gamma^\pm_1$ of the quasi-geodesic, and the distances $d^\pm_1$. We have
$l(\sigma'_0)\le l(\sigma'_1)+\delta+6\delta$. To show this, we apply
Lemma~\ref{orthPlane} assuming that $c=s'_0, d=z$, and $a$ and $b$ are the
endpoints of $\gamma_1$. Continuing the process, we obtain a subdivision of
$\gamma$ by $\gamma^\pm_i$ and two families of geodesics $\sigma_i$ and
$\sigma'_i$. Finally, for some $n$, we obtain $d_n\le\tilde{c}+
\delta+77\delta=78\delta+\tilde{c}$.

The quantity $L_i$ is the length of the subcurve $\gamma_{i-1}$, which is
also a quasi-geodesic. Hence, $l(\sigma'_n)\le L_n\le4(d_n+\tilde{c})
\lambda^2$ by construction.Therefore,
$$
l(\sigma'_0)\le\sum_{i=1}^n7\delta+4(78\delta+2\tilde{c})\lambda^2.
$$
Our goal is to prove that for sufficiently large $\lambda$,
$\sum d_i\le C\lambda^2$, where $C$ is a constant depending only on
$\tilde{c}$ and $\delta$.

Because the value of the classical length of a segment is not less then the
value of its $\Delta'$-length, by Lemma~\ref{expC} (with $\Delta'=\delta$)
and because $\lfloor(d^\pm_{i+1}-\delta-58\delta)/19\delta\rfloor
19\delta\ge d^\pm_{i+1}-78\delta$, we obtain
$$
l(\gamma^+_i\cup\gamma^-_i)\ge
\delta\frac{\delta}{4\delta}\max(e^{K(d^+_{i+1}-78\delta)/\delta},
e^{K(d^-_{i+1}-78\delta)/\delta})\ge
\frac{\delta}4e^{K(d_{i+1}-\delta-156\delta)/2\delta}.
$$
On the other hand, $l(\gamma^+_i\cup\gamma^-_i)=L_i-L_{i+1}$. Hence, setting
$C_0=(\delta/4)e^{-157K/2}$, we have
\begin{equation}\label{eq1}
C_0e^{Kd_{i+1}/2\delta}\le L_i-L_{i+1}.
\end{equation}

Let $g^{\pm}_i$ be a point of $\gamma^{\pm}_i$ that minimizes the distance
to $\sigma'_i$. The part of the quasi-geodesic $\gamma$ between $g^+_i$ and
$g^-_i$ is also a quasi-geodesic with the same constants and properties. By
the triangle inequality, $|g^-_i-g^+_i|<d^+_i+d^-_i+\delta$. Therefore,
by construction (see the beginning of the proof) and because
$d_i\ge78\delta$,
\begin{equation}\label{eq2}
L_i\le4\lambda^2(d_i+\tilde{c})\le8\lambda^2d_i.
\end{equation}

The function $e^{-d}$ is decreasing. Therefore, because
$d_i\ge\frac4{\lambda^2}L_i$, we obtain
$$
\frac K{2\delta}d_ie^{-Kd_i/2\delta}\le
\frac K{2\delta}\frac4{\lambda^2}L_ie^{-(4K/2\delta\lambda^2)L_i}.
$$

We are now ready to estimate $n$:
$$
n=\sum_{i=1}^n1=
\frac1{C_0}\sum_{i=1}^ne^{-Kd_i/2\delta}C_0e^{Kd_i/2\delta}\le
\frac1{C_0}\frac{\lambda^2\delta}{4K}
\sum_{i=1}^ne^{-(8K/2\delta\lambda^2)L_i}
\frac{4K}{\lambda^2\delta}(L_{i-1}-L_i).
$$
Setting $X_i=(4K/\lambda^2\delta)L_i$, we have
$$
\sum_{i=1}^ni\le
\frac{\lambda^2\delta}{4C_0K}\sum_{i=1}^ne^{-X_i}(X_{i-1}-X_i),
$$
and because the function $e^{-X}$ is decreasing for $X\ge0$, we can use the
estimate
$$
\sum_{i=1}^ne^{-X_i}(X_{i-1}-X_i)\le
\int_0^\infty e^{-X}dX=-e^{-x}|_0^\infty=1.
$$

Summarizing all the facts, returning to the initial notation, and
recalling that $K=\ln2/19$, we finally obtain the claimed result
$$
H=4\lambda^2\biggl(78c+\biggl(78+\frac{133}{\ln2}
e^{157\ln2/38}\biggr)\delta\biggr).
$$
\end{proof}

\section{Examples}\label{lastPr}

\subsection{Proof of Proposition~\ref{ballCenter}}

Here, we prove Proposition~\ref{ballCenter} (see the introduction). We call
any connected component of a ball with a deleted center $O$ a \emph{branch}.
We call points that are sent to the branch containing the image of the
center $f(O)$ green points and all other points of $T$ red points.

\begin{proof}[Proof of Proposition~\ref{ballCenter}]
We show that there exist two red points $r_1$ and $r_2$ such that
$d(O,r_1r_2)\le r=c+1$.

By Definition~\ref{defQIS}, a $c$-neighborhood of every point of the border
should contain a point of the image. We must have at least
$(d{-}1)d^{R-c-1}$ red points near the border (we exclude the green part).
The number of points in each connected component of the complement of the
ball of radius $r$ is less than $d^{R-r}$. Therefore, if $r\gg c$, then one
component contains an insufficient number of points to cover the border of
$B$. Hence, there exists two points $r_1$ and $r_2$ in different components
of $T$, which means that the geodesic $r_1r_2$ passes at a distance less
than $r$ from the center $O$ and the quasi-geodesic $f(r_1r_2)$ passes at
a distance $\lambda r+c$ from $f(O)$ and belongs to an $H$-neighborhood of
the geodesic $f(r_1)f(r_2)$. Because every path from $f(O)$ to
$f(r_1)f(r_2)$ passes through $O$, we conclude that $d(O,f(0))<H+c+
\lambda r$. We need only choose a good value for $r$. Simply calculating the
number of points in a mentioned component gives the estimate
$1+d+d^2+\dots+d^{R-r}\le(1/\ln d)d^{R-r+1}$. For $r=c+1$, we have
$(1/\ln d)d^{R-r+1}\le(d-1)d^{R-c-1}$, which completes the proof.
\end{proof}

\subsection{Optimality of Theorem~\ref{MainTheorem}}\label{lastPr2}
We present an example of a $(\lambda,c)$-quasi-geodesic $\gamma$ in a tree
with $H=\lambda^2c/2$. We take a real interval $[a,b]$ of length
$\lambda^2c/2$ that is a subtree. We use an interval $I=[u,v]$ of length
$\lambda c$ to parameterize $\gamma$. We define $\gamma$ as follows:
\begin{itemize}
\item $\gamma(u)=\gamma(v)=a$,
\item we set $\gamma(w)=b$ for the midpoint $w$ of $I$, and
\item we set $D=min\{|u-x|,|v-x|\}$ and $|a-\gamma(x)|=\lambda D$ for any
$x\in[a,b]$.
\end{itemize}
It is easy to verify that $\gamma$ is a well-defined quasi-geodesic. On the
half-intervals $[u,w]$ and $[w,v]$, $\gamma$ just stretches the distances by
$\lambda$. We now take any two points $x\in[u,w]$ and $y\in[w,v]$. Assuming
that $|u-x|\le|v-y|$, we obviously have $|x-y|=|u-v|-|u-x|-|v-y|$.

{\bf I.} The lower bound of $|\gamma(x)-\gamma(y)|$ is given by
$$
\frac1{\lambda}(|u-v|-|u-x|-|v-y|)-c\le0\le|\gamma(x)-\gamma(y)|.
$$

{\bf II.} The upper bound of $|\gamma(x)-\gamma(y)|$ is given by
\begin{align*}
&\lambda(|u-v|-|u-x|-|v-y|)+c-(|a-\gamma(y)|-|a-\gamma(x)|)
\\
&\qquad\qquad=\lambda(|u-v|-|u-x|-|v-y|)+c-\lambda(|v-y|-|u-x|)
\\
&\qquad\qquad=\lambda^2c-2\lambda|v-y|+c\ge c\ge0.
\end{align*}
% We must clarify the sense of some formulas in the case $\delta=0$.
% From~\eqref{eq1} and~\eqref{eq2}, we have
% $\delta e^{Kd_{i+1}/2}\le\lambda^2d_i$. With a fixed value of $d_{i+1}$,
% the expression in the left-hand side of the inequality goes to infinity
% as $\delta\to0$. On the other hand, it is bounded by the finite
% right-hand side. This means that the sequence ${d_i}$ decreases
% very rapidly for small $\delta$. In particular, for $\delta=0$,
% $d_1$ vanishes immediately, i.e., the projection of $\gamma_0$ is a
% single point. This can be easily seen by considering
% a quasi-geodesic in a tree.

\subsection{Achieving the displacement $\lambda c$}
We now describe a self-quasi-isometry $f$ of a ball $B$ in a tree that moves
the center $O$ a distance $\lambda c/2$. We assume that the radius of $B$
is greater than $\lambda c$. We note that the images of two points inside
the ball $B_1$ of radius $\lambda c$ with a center $O$ can be just the same
point. Let the quasi-isometry $f$ fix the boundary of $B_1$, and let
$|O-f(O)|=\lambda c/2$. The segment $[O,f(O)]$ is sent to the only point
$f(O)$. For any point $a$ of $\partial B_1$, we let $a'$ denote a projection
of $a$ on $[O,f(O)]$ and assume that the interval $[a,a']$ is linearly
stretched and sent to the interval $[a,f(O)]$. Such a map $f$ assigns only
one image to any point. It is easy to verify that $f$ is a quasi-isometry
because the distances between points can be diminished up to $0$ and are not
increased more than $\lambda$ times.

\section{Anti-Morse lemma}\label{antiMorseSection}

We have already proved that any quasi-geodesic $\gamma$ in a hyperbolic space
is at distance not more than $\lambda^2(A_1c+A_2\delta)$ from a geodesic
segment $\sigma$ connecting its endpoints. This estimate cannot be improved.
But the curious thing is that this geodesic belongs to a
$\ln\lambda$-neighborhood of the quasi-geodesic! We can therefore say that
any quasi-geodesic is $\ln\lambda$-quasiconvex. This upper bound can be
improved in some particular spaces: for example, any quasi-geodesic is
$c$-quasiconvex in a tree.

The proof of Theorem~\ref{SecondTheorem} (see the introduction) that we give
below is based on using
\begin{itemize}
\item Lemma~\ref{expC} (exponential contraction) to prove that at the
distance $\ln\lambda$ from the geodesic $\sigma$ is at most
$\lambda^2\ln\lambda$ and
\item an analogue of Lemma~\ref{expC} to prove that the length of a circle
of radius $R$ is at least $e^R$ (up to some constants).
\end{itemize}

\begin{lemma}\label{lem11}
Let $X$ be a hyperbolic metric space, $\gamma$ be a
$(\lambda,c)$-quasi-geodesic, and $\sigma$ be a geodesic connecting the
endpoints of $\gamma$. Let $(y_u,y_v)$ be an arc of $\gamma$ such that no
point of this arc is at distance less than $C_1\ln\lambda+C_2$ from
$\sigma$ and $y_u$ and $y_v$ are the points of the arc nearest $\sigma$.
Then the length of the projection of the arc $(y_u,y_v)$ on $\sigma$
does not exceed $\max(8\delta,C_3\ln\lambda)$ (with some well-chosen
constants $C_1$, $C_2$, and $C_3$ depending linearly on $c$).
\end{lemma}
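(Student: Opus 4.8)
The plan is to read the bound directly off the exponential contraction lemma (Lemma~\ref{expC}), exploiting that the arc sits at distance at least $C_1\ln\lambda+C_2$ from $\sigma$, so that the exponential decay of the projected length outweighs the quadratic growth of the $\Delta$-length supplied by Lemma~\ref{qgLen}. The one genuine difficulty is that the quantity to be bounded, the projection length $P$, simultaneously controls the $\Delta$-length that drives the contraction, so the estimate is self-referential and can only be closed by making a feedback coefficient strictly less than $1$.

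First I would set up the geometry. The subarc $(y_u,y_v)$ is again a $(\lambda,c)$-quasi-geodesic, and by hypothesis its minimal distance to $\sigma$ is $R:=d((y_u,y_v),\sigma)\ge C_1\ln\lambda+C_2$, attained at the endpoints, so $d(y_u,\sigma)=d(y_v,\sigma)=R$. If $\pi_u,\pi_v\in\sigma$ are projections of $y_u,y_v$, then $\pi_u,\pi_v$ lie in the projection of the whole arc, whence $|\pi_u-\pi_v|\le P$ and
$$
|y_u-y_v|\le|y_u-\pi_u|+|\pi_u-\pi_v|+|\pi_v-y_v|\le P+2R .
$$
Choosing $\Delta=\max(2c,16\delta)$ — which respects the hypothesis $\Delta\ge2c$ of Lemma~\ref{qgLen} while keeping the prefactor $16\delta/\Delta\le1$ in hand — Lemma~\ref{qgLen} applied to the subarc gives $L_\Delta\le4\lambda^2|y_u-y_v|\le4\lambda^2(P+2R)$; the degenerate case $|y_u-y_v|<c$ is disposed of directly, since then $P$ is already $O(c+\delta)$.

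Next I would insert this into Lemma~\ref{expC}. Taking the same $\Delta$, with $C_2\ge\Delta+58\delta$ so that the hypothesis $R\ge\Delta+58\delta$ holds, and writing $r=\lfloor(R-\Delta-58\delta)/19\delta\rfloor19\delta$ and $K=(\ln2)/19$, the lemma yields
$$
P\le\max\Bigl(\tfrac{4\delta}{\Delta}e^{-Kr/\delta}(L_\Delta+\Delta),\,8\delta\Bigr).
$$
If the $8\delta$ branch is active the claim is immediate, so suppose the first branch holds and substitute the $\Delta$-length bound. With $\theta:=\tfrac{16\delta\lambda^2}{\Delta}e^{-Kr/\delta}$ this reads
$$
P\le\theta\,(P+2R)+4\delta e^{-Kr/\delta}.
$$
Since $r\ge R-\Delta-58\delta$ and $R\ge C_1\ln\lambda+C_2$, one has $e^{-Kr/\delta}\le\lambda^{-KC_1/\delta}\cdot\mathrm{const}$, so $\theta\le\mathrm{const}\cdot\lambda^{2-KC_1/\delta}$. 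Choosing $C_1$ past the threshold $2\delta/K=38\delta/\ln2$ makes this exponent negative, and together with the choice of $\Delta$ this forces $\theta\le\tfrac12$. Solving the displayed inequality then gives
$$
P\le\frac{2\theta R+4\delta e^{-Kr/\delta}}{1-\theta}\le4\theta R+8\delta e^{-Kr/\delta},
$$
and since $\theta R$ is $O(\ln\lambda)$ over the admissible range of $R$ while $8\delta e^{-Kr/\delta}\le8\delta$, the right-hand side obeys a bound of the announced form $\max(8\delta,C_3\ln\lambda)$; the linear dependence of $C_1,C_2,C_3$ on $c$ enters only through $\Delta$ and $C_2$.

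I expect the main obstacle to be precisely closing this self-referential estimate: neither $P$ nor $|y_u-y_v|$ can be bounded without the other, and the loop only closes because the contraction rate $e^{-KR/\delta}$ at distance $R\gtrsim\ln\lambda$ strictly dominates the factor $\lambda^2$. This is exactly why the hypothesis must place the arc at distance $\gtrsim\ln\lambda$ and not merely ``far,'' and why the threshold $C_1>2\delta/K$ is the crucial inequality that makes $\theta<1$. The remaining work — verifying $R\ge\Delta+58\delta$, handling the short-arc degeneracy, and tracking how $C_1,C_2,C_3$ depend linearly on $c$ through $\Delta$ — is routine bookkeeping.
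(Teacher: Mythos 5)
Your proposal is correct and takes essentially the same approach as the paper's proof: both apply Lemma~\ref{qgLen} to bound the $\Delta$-length of the arc, insert it into the exponential contraction Lemma~\ref{expC}, and choose $C_1$ so that the decay $e^{-Kr/\delta}$ at distance $R\ge C_1\ln\lambda+C_2$ overcomes the polynomial factor in $\lambda$, closing the same self-referential estimate. The only difference is algebraic bookkeeping: the paper routes the loop through the parameter interval $|u-v|$ (using both quasi-geodesic inequalities, hence needing contraction of order $\lambda^{-4}$), whereas you solve the fixed-point inequality in the projection length $P$ directly with feedback coefficient $\theta\le1/2$, which needs only order $\lambda^{-2}$.
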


\begin{proof}
By the definition of a quasi-geodesic, we have
$$
\frac{|u-v|}{\lambda}-c\le|y_u-y_v|\le\lambda|u-v|+c.
$$
On the other hand,
$$
|y_u-y_v|\le|y_u-y_u'|+|y_u'-y_v'|+|y_v'-y_v|,
$$
where $y'_u$ and $y'_v$ are the projections of $y_u$ and $y_v$ on $\sigma$.
We adjust the constants $C_1$ and $C_2$ such that
$$
C_1\ln\lambda+C_2=\frac{19\delta^2}K
\ln\frac{8\delta\lambda^4}{\Delta}+\Delta+58\delta,
$$
where $\Delta=2c$ (such a choice allows applying Lemma~\ref{qgLen}). We
apply the lemma on exponential contraction (we assume that the length of the
arc is rather large for using the estimate with an exponential factor and
not to treat the obvious case where the length of the projection is
$8\delta$). We let $l(y_u, y_v)$ denote the $\Delta$-length of the arc
$(y_u,y_v)$:
$$
|y'_u-y'_v|\le l(y_u,y_v)e^{-K(r-\Delta-58\delta)/\delta}=
\frac1{2\lambda^4}l(y_u,y_v).
$$
Combining all these inequalities and using Lemma~\ref{qgLen}, we obtain
\begin{align*}
\frac{|u-v|}{\lambda}-c\le|y_u-y_v|&\le
\frac8K\ln\sqrt[4]{2}\lambda+\frac1{8\lambda^4}l(y_u, y_v)
\\
&\le\frac8K\ln\sqrt[4]{2}\lambda+4\lambda^2\frac1{8\lambda^4}|y_u-y_v|
\\
&\le\frac8K\ln\sqrt[4]{2}\lambda+\frac1{2\lambda^2}(\lambda|u-v|+c).
\end{align*}
We therefore conclude that $|y_u-y_v|\le C_3\lambda^2\ln\lambda$, hence
$l(y_u,y_v)\le C_3\lambda^4\ln\lambda$, and, finally, the length of the
projection of the arc $(y_u,y_v)$ of $\gamma$ does not exceed
$\max(8\delta,C_3\ln\lambda)$.
\end{proof}

\begin{proof}[Proof of Theorem~\ref{SecondTheorem}]
The proof follows directly from Lemma~\ref{lem11}. Because we have already
proved that for every point $z'\in\sigma$, there exists a point $z\in\gamma$
such that the projection of $z$ on $\sigma$ is at distance not more than
several times $c+\delta$ from $z'$. For simplicity, we therefore assume that
for any point of $\sigma$, there exists a point of $\gamma$ projecting on
this point.

If the distance between $z$ and $z'$ is less than $C_1\ln\lambda$ for some
constant $C_1=C_1(c,\delta)$ (the value of $C_1$ can be found from
Lemma~\ref{lem11}), then the statement is already proved. If not, then we
take an arc $(y_u,y_v)$ of $\gamma$ containing the point $z$ such that the
endpoints $y_u$ and $y_v$ are at the distance $C_1\ln\lambda$ from $\sigma$
and these points are the points of this arc that are nearest $\sigma$.
Hence, by the Lemma~\ref{lem11}, the length of the projection (which
includes $z$) of the arc $(y_u,y_v)$ does not exceed $C_4\ln\lambda$.
Therefore, the distance from $z$ to $y_u$ (and $y_v$) is not greater than
$(C_1+C_4)\ln\lambda$.
\end{proof}

\section{Geodesically rich spaces}

\begin{definition}
A metric space $X$ is said to be geodesically rich if there exist constants
$r_0$, $r_1$, $r_2$, $r_3$, and $r_4$ such that
\begin{itemize}
\item for every pair of points $p$ and $q$ with $|p-q|\ge r_0$, there
exists a geodesic $\gamma$ such that $d(p,\gamma)<r_1$ and
$|d(q,\gamma)-|q-p||<r_2$ and
\item for any geodesic $\gamma$ and any point $p\in X$, there exists a
geodesic $\gamma'$ passing in a $r_3$-neighborhood of the point $p$ and such
that $d(p,\gamma)$ differs from the distance between $\gamma'$ and $\gamma$
by not more than $r_4$.
\end{itemize}
\end{definition}

\begin{example}
A line and a ray are not geodesically rich. Both of them satisfy the second
condition in the definition, but not the first.
\end{example}

\begin{example}
Nonelementary hyperbolic groups are geodesically rich. We prove this later.
\end{example}

Any $\delta$-hyperbolic metric space $H$ can be embedded isometrically in a
geodesically-rich $\delta$-hyperbolic metric space $G$ (with the same
constant of hyperbolicity). We take a 3-regular tree with a root $(T,O)$,
assume that $G=H\times T$, and set the metric analogously to a real tree:
\begin{itemize}
\item the distance between points in the subspace $(H,O)$ equals the
distance between the corresponding points in $H$;
\item the distance between other points equals the sum of the three
distances from the points to their projections on $(H,O)$ and between their
projections on $(H,O)$.
\end{itemize}
It is easy to show that the space $G$ is $\delta$-hyperbolic and
geodesically rich. But such a procedure completely changes the ideal
boundary of the space. We therefore ask another question:

\begin{question}
Is it possible to embed a $\delta$-hyperbolic metric space $H$ isometrically
in a geodesically rich $\delta$-hyperbolic metric space $G$ with an
isomorphic boundary?
\end{question}

\begin{lemma}\label{hypGr1}
Let $G$ be a nonelementary hyperbolic group. Then there exist constants
$c_1$ and $c_2$ such that for any point $p\in G$ and any geodesic
$\gamma\in G$ such that $d(p,\gamma\ge c_1$, there exists a geodesic
$\gamma'$ with a point $q$ minimizing (up to a constant times $\delta$) the
distance to $\gamma$ and $|p-q|\le c_2$.
\end{lemma}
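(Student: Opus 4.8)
The plan is to build $\gamma'$ as a bi-infinite geodesic that passes within a bounded distance of $p$ and both of whose ends run \emph{away} from the foot of $p$ on $\gamma$; then $p$ itself is, up to a bounded error, the point of $\gamma'$ closest to $\gamma$, which is exactly the conclusion. The whole difficulty is to produce such a geodesic with constants independent of $p$ and $\gamma$. I would first exploit homogeneity: $G$ acts on its Cayley graph simply transitively by isometries, so it suffices to produce a bounded ``directional spread'' at the single base point $e$ and transport it to $p$ by the isometry $g\mapsto pg$. Since $G$ is nonelementary, $\partial G$ contains at least three distinct points $a_1,a_2,a_3$; their pairwise Gromov products $(a_i,a_j)_e$ are finite, bounded by a constant $D_1$ depending only on $G$. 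Fixing geodesic rays $[e,a_i)$ (which exist because the Cayley graph is proper) and translating by $p$ yields three geodesic rays from $p$ to boundary points with all pairwise products at $p$ equal to $(a_i,a_j)_e\le D_1$, uniformly in $p$.

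Next I would select the two ends of $\gamma'$. Let $\bar p\in\gamma$ realise $R:=d(p,\gamma)=|p-\bar p|$. The key observation is that a ray $[p,\xi)$ leaves $\gamma$ monotonically as soon as $(\xi,\bar p)_p$ is bounded. Indeed, because $\bar p$ is a foot of a perpendicular, each end $e_1$ of $\gamma$ satisfies $(e_1,\bar p)_p\ge R-O(\delta)$ (by Lemma~\ref{lemAC} and thinness); so if $(\xi,\bar p)_p\le D_1$ and $R$ is large, the four-point inequality of Definition~\ref{defDHG} forces $(\xi,e_1)_p$ and $(\xi,e_2)_p$ to be bounded as well, i.e. $[p,\xi)$ is asymptotic to neither end of $\gamma$, and by Lemma~\ref{expC} its projection on $\gamma$ has bounded diameter. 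Among my three spread rays at most one can have $(\,\cdot\,,\bar p)_p$ large (two such would be mutually close, contradicting $(a_i,a_j)_e\le D_1$), so at least two of them, say $\xi$ and $\eta$, have $(\xi,\bar p)_p,(\eta,\bar p)_p\le D_1$. Since also $(\xi,\eta)_p\le D_1$, the concatenation of $[p,\xi)$ and $[p,\eta)$ is a geodesic $\gamma'$ passing within $D_1+O(\delta)$ of $p$; call $q_0\in\gamma'$ the point nearest $p$.

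Finally I would verify both assertions by a tree-approximation estimate on the finite configuration $\{p,\bar p,\xi,\eta,e_1,e_2\}$. Approximating by a tree (error $O(\delta)$ for boundedly many points) gives, for every $w\in\gamma'$, a bound of the shape $d(w,\gamma)\ge |w-q_0|+R-2D_1-O(\delta)$, using that $(w,\bar p)_p\le D_1+O(\delta)$ along either ray. Hence $d(\,\cdot\,,\gamma)$ is coarsely minimised on $\gamma'$ inside an $O(D_1+\delta)$-ball about $q_0$. Taking $q$ to be the genuine nearest point of $\gamma'$ to $\gamma$ then yields $d(q,\gamma)=d(\gamma',\gamma)$ together with $|q-q_0|\le 2D_1+O(\delta)$, so $|p-q|\le c_2:=3D_1+O(\delta)$ and $d(q,\gamma)=R\pm O(D_1+\delta)=d(p,\gamma)\pm O(D_1+\delta)$, which is the ``minimising up to a constant times $\delta$'' in the statement. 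The hypothesis $d(p,\gamma)\ge c_1$ with $c_1=O(D_1+\delta)$ is used precisely to keep $R$ large enough that the initial behaviour of the rays stays off $\gamma$ and that Lemma~\ref{expC} is applicable.

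I expect the main obstacle to be exactly this last step: pinning down, with constants independent of $p$ and $\gamma$, that ``both ends leave $\bar p$'' really forces the closest point of $\gamma'$ to $\gamma$ to sit near $p$ rather than drifting toward an end of $\gamma$. This is where the uniformity coming from homogeneity and the exponential-contraction estimate of Lemma~\ref{expC} must be combined carefully, and where the coarse (rather than exact) nature of nearest-point projections in a $\delta$-hyperbolic space is felt most; the boundary-point combinatorics and the tree approximation are, by contrast, routine once the right $D_1$ is fixed.
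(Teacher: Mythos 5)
Your proof is correct in substance and ends in the same geometric configuration as the paper's — a second geodesic $\gamma'$ passing through a bounded neighborhood of $p$, both of whose ends are transverse to $\gamma$, so that the closest approach of $\gamma'$ to $\gamma$ must occur near $p$ — but you reach it by a genuinely different route. The paper reduces to $p=e$ by homogeneity, puts a visual metric on $G(\infty)$, and uses compactness plus nonelementarity to produce an $\varepsilon>0$ such that no two $\varepsilon$-balls cover the boundary; since a geodesic at distance $\ge c_1$ from $p$ has endpoints at visual distance $<\varepsilon/2$, it can choose $\xi,\eta$ at visual distance $>\varepsilon$ from each other and from $\gamma(\infty)$, join them by $\gamma'$ (which then passes near $p$), and delegate the nearest-point verification to Lemma~\ref{hypGr2}, whose proof is a Gromov-product computation based on the thin-triangle lemmas such as Lemma~\ref{orthTriangle}. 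You instead fix once and for all three boundary points with pairwise Gromov products $\le D_1$ at $e$, transport them to $p$ by the isometric group action, discard the at most one direction whose product with the foot $\bar p$ is large (four-point inequality of Definition~\ref{defDHG}), and do the verification by a tree approximation of the finite configuration $\{p,\bar p,\xi,\eta,e_1,e_2\}$. Your version buys explicit, uniform constants (everything is expressed through the single number $D_1$ and $\delta$) and avoids both the visual metric and the compactness/covering argument; its selection step is a clean quantitative substitute for the paper's ball-covering step. The paper's version buys brevity, and its Lemma~\ref{hypGr2} is stated for general $\delta$-hyperbolic spaces, so it isolates the part of the argument that makes no use of the group.

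Two repairs are needed, both routine. First, the concatenation of the rays $[p,\xi)$ and $[p,\eta)$ is \emph{not} a geodesic; you must take an actual geodesic line $\gamma'$ with endpoints $\xi$ and $\eta$ (it exists by Arzel\`a--Ascoli, the Cayley graph being proper), and it passes within $(\xi,\eta)_p+O(\delta)\le D_1+O(\delta)$ of $p$. Nothing downstream changes, because your final tree-approximation estimate uses only the ideal endpoints of $\gamma'$, not the assumption that $\gamma'$ contains the two rays. Second, the appeal to Lemma~\ref{expC} in the selection step should be dropped: its hypothesis (that the curve stays far from $\gamma$) is not available at that stage, and it is not needed — boundedness of $(\xi,e_1)_p$ and $(\xi,e_2)_p$, fed into the tree approximation, already yields the coercivity estimate $d(w,\gamma)\ge|w-q_0|+R-O(D_1+\delta)$ for $w\in\gamma'$, which is all that the conclusion requires.
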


\begin{proof}
By symmetry, we can assume that $p$ is the unity of the group $G$. We supply
the ideal boundary $G(\infty)$ with a visual distance. Because $G$ is a
nonelementary group, its ideal boundary $G(\infty)$ has at least three
points (hence, infinitely many points).

We first prove by contradiction that there exists a $\varepsilon$ such that
for every pair of points $\xi$ and $\eta$ of $G(\infty)$, the union of the
two balls of radius $\varepsilon$ with the centers $\xi$ and $\eta$ does not
cover the whole ideal boundary. On the contrary, we suppose that there exist
two sequences of points $\xi_n$ and $\eta_n$ such that the union of
$B(\xi_n,1/n)$ and $B(\eta_n,1/n)$ includes $G(\infty)$. By compactness, we
can assume that $\xi_n\to\xi$ and $\eta_n\to\eta$, and we find that
$G(\infty)$ belongs to the union of $B(\xi,2/n)$ and $B(\eta,2/n)$. Hence,
the ideal boundary contains only the two points $\xi$ and $\eta$, which
contradicts the assumption that $G$ is nonelementary.

Let $c_1$ be a constant such that if a geodesic $\gamma$ is at a distance at
least $c_1$ from the point $p$, then the visual distance between its
endpoints (at infinity) is less than $\varepsilon/2$. We now take two points
$\xi$ and $\eta$ of $G(\infty)$ outside a $\varepsilon/4$-neighborhood of
$\gamma(\infty)$ such that $|\xi-\eta|>\varepsilon$ (the preceding argument
established that such a choice is possible). Let $\gamma'$ be a geodesic
with the endpoints $\xi$ and $\eta$. Hence, $d(p,\gamma')<c_1$. Applying
Lemma~\ref{hypGr2} completes the proof.
\end{proof}

\begin{lemma}\label{hypGr2}
Let $X$ be a $\delta$-hyperbolic space. Then for every $\varepsilon>0$, there
exist constants $c_1$ and $c_2$ such that for every pair of geodesics
$\gamma$ and $\gamma'$ and every point $p$ such that $d(p,\gamma)<c_1$
and visual distance between the endpoints $\gamma(\infty)$ and
$\gamma'(\infty)\ge\varepsilon$, there exists a point $q$ on $\gamma$
minimizing the distance to $\gamma'$ up to some constant times $\delta$ and
such that $|p-q|\le c_2$.
\end{lemma}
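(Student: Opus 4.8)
The plan is to translate both hypotheses into statements about Gromov products based at $p$, apply Gromov's tree approximation to the five relevant points, and then read off the conclusion from an explicit computation in the approximating tree.

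First I would record what the hypotheses give as Gromov products at $p$. Writing $a_-,a_+$ for the endpoints of $\gamma$ and $b_-,b_+$ for those of $\gamma'$ (at infinity; I will approximate them by points far along the respective rays, see below), the standard comparison between the visual distance and the Gromov product, $\vd(\xi,\eta)\asymp e^{-(\xi,\eta)_p}$ (up to the usual parameter and multiplicative constants), turns the hypothesis $\vd(\gamma(\infty),\gamma'(\infty))\ge\varepsilon$ into a uniform upper bound $(a_i,b_j)_p\le C_0$ for all four ``cross'' products, with $C_0=C_0(\varepsilon,\delta)$. Likewise, since $d(p,\gamma)<c_1$, the product of the two endpoints of $\gamma$ is small: $(a_-,a_+)_p=:\alpha\le c_1+O(\delta)$. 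The product $(b_-,b_+)_p=:\beta$, which is comparable to $d(p,\gamma')$, is left completely uncontrolled, and the argument must never use any bound on it.

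Next I would invoke Gromov's tree approximation (see \cite{ghys}, Chapter~2.2, Theorem~12) for the five points $p,a_-,a_+,b_-,b_+$, obtaining a finite tree $T$ realizing all ten Gromov products up to an additive error $O(\delta)$ (for five points the error is a small multiple of $\delta$). In $T$ the two geodesics $[a_-,a_+]$ and $[b_-,b_+]$ either branch apart across a single bridge or overlap along a segment, according to which of the three pairings of the four leaves carries the largest sum of Gromov products. Let $q$ denote the point of $[a_-,a_+]=\gamma$ closest to $[b_-,b_+]=\gamma'$ (the near endpoint of the bridge, or the overlap endpoint nearest $p$), and let $\bar p$ be the projection of $p$ onto $\gamma$, so that $|p-\bar p|=\alpha$. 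A direct computation of the tree distances in each topology shows that $|\bar p-q|$ is governed by the cross products: whatever the topology, at least one cross product equals $\alpha+|\bar p-q|$, so the bound $(a_i,b_j)_p\le C_0$ forces $|\bar p-q|\le C_0$, whence $|p-q|\le\alpha+C_0$. I would carry out this case analysis over the (at most three) topologies; each case is a one-line tree-distance computation, and the crucial feature is that the uncontrolled product $\beta$ never enters the estimate for $|\bar p-q|$.

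Finally I would transport the conclusion back to $X$: tree distances agree with those in $X$ up to $O(\delta)$, so the point of $\gamma$ corresponding to $q$ minimizes $d(\cdot,\gamma')$ up to a constant times $\delta$, and $|p-q|\le\alpha+C_0+O(\delta)=:c_2=c_2(\varepsilon,\delta,c_1)$, as required. The step I expect to require the most care is the reduction to a finite configuration: the endpoints are ideal, so I would replace each by a point at distance $T\gg C_0+c_1+\delta$ along its ray and verify that the branch/overlap structure, and hence $q$, is determined at this bounded scale and is insensitive to $T$; a secondary nuisance is fixing the basepoint of the visual metric, since moving it by a bounded amount only shifts the Gromov products (and thus $C_0$) by a controlled constant.
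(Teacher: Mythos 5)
Your proposal is correct in substance but follows a genuinely different route from the paper's. The paper never invokes tree approximation here: it replaces $p$ by its projection $p'$ on $\gamma$ (using Lemma~\ref{VDP} to control the change of visual basepoint) and then argues directly in $X$ with the projection lemmas of Section~2 --- writing $a'$ for the projection on $\gamma$ of an endpoint $a$ of $\gamma'$, and $b'$ for the projection of the point $b\in\gamma'$ nearest $\gamma$, it estimates the cross Gromov product at $p'$ via Lemma~\ref{orthTriangle}, obtaining $O(\delta)$ when $p'$ lies between $a'$ and $b'$ and $|p'-a'|+O(\delta)$ otherwise, so that the visual-separation hypothesis pins $p'$ within bounded distance of $a'$; a final thin-triangle argument then places $a'$ within $O(\delta)$ of a near-minimizer of the distance to $\gamma'$. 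Your tree identity (some cross product equals $\alpha+|\bar p-q|$) is exactly the tree-ified version of that dichotomy, and it does hold in every topology (bridge, overlap with either orientation); your formulation also makes explicit that the uncontrolled product $\beta=(b_-,b_+)_p$ never enters, which is somewhat hidden in the paper's computation. What the tree buys you is a mechanical, easily checkable case analysis; what it costs --- and this is where you should expect the real work, beyond the truncation of ideal endpoints you already flag --- is the transport back to $X$: the approximation theorem of~\cite{ghys} controls only the finite configuration (or the spider of geodesics issuing from the basepoint $p$), not $\gamma$ and $\gamma'$ themselves, so both ``the point of $\gamma$ corresponding to $q$'' and the claim that it minimizes $d(\cdot,\gamma')$ up to a constant times $\delta$ require auxiliary thin-ideal-triangle arguments identifying $\gamma$ and $\gamma'$ with the appropriate portions of the spider up to $O(\delta)$. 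The paper's direct estimates with Lemmas~\ref{orthPr1} and~\ref{orthTriangle} avoid that overhead entirely and produce explicit constants, at the price of a less transparent case structure.
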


\begin{proof}
By Lemma~\ref{VDP}, we can replace the point $p$ with its projection $p'$
on the geodesic $\gamma$. Let $a'$ and $b'$ be the projections on $\gamma$
of the endpoint $a=\gamma'(-\infty)$ and the point $b$ of $\gamma'$ that
minimizes the distance from $\gamma'$ to $\gamma$.

We consider two sequences $x_n$ and $y_n$ of points respectively on $aa'$
and $a'\gamma(+\infty)$ such that $\lim_{n\to\infty}x_n=a$ and
$\lim_{n\to\infty} y_n=\gamma(+\infty)$. We let $a'_n$ denote the
projections of $x_n$. Obviously, $a'_n\to a'$ as $n\to\infty$. By the
definition of Gromov's product, $(x|y)_{p'}=\lim_{n\to\infty}
(x_n|y_n)_{p'}$. Using Lemma~\ref{orthTriangle}, we now estimate
$(x_n|y_n)_{p'}$:
\begin{align*}
(x_n|y_n)_{p'}&=\frac12(|p'-x_n|+|p'-y_n|-|x_n-y_n|)
\\
&\le\frac12(|p'-a'_n|+|a'_n-x_n|+8\delta+
|p'-y_n|-|a'_n-x_n|-|a'_n-y_n|+2\delta).
\end{align*}

Now, if $p'$ is between $a'$ and $b'$, then $(x_n|y_n)_{p'}\le5\delta$;
otherwise (we assume that $p'$ is closer to $a'$, i.e., the order of points
on $\gamma$ is $p',a',b'$), $(x_n|y_n)_{p'}\le|p'-a'|+5\delta$.

Therefore, to finish the proof, we must now prove that the point $a'$ is not
far from $ab$. We apply Lemma~\ref{orthTriangle} once more to the triangle
$aa'b'$ and obtain $d(a',ab')\le2\delta$. Hence, because the triangle $abb'$
is $\delta$-thin, the distance from $a'$ to $ab$ or $bb'$ is not greater
than $3\delta$. In the first case, the statement is proved immediately. In
the second case, we note that $bb'$ is a perpendicular to $ab'$ and hence
$d(a'b')\le2d(a',bb')\le6\delta$. Therefore, $a'$ in this case is near the
projection of the point of $ab$ that is nearest $ab'$, which completes the
proof.
\end{proof}

\begin{lemma}\label{lem14}
Let $G$ be a nonelementary hyperbolic group. Then there exist constants
$c_0$, $c_1$, and $c_2$ such that for every two points $p$ and $q$ in the
group $G$ with $|p-q|>r_0$, there exists a geodesic $\gamma$ such that
$d(p,\gamma)\le r_1$ and $||p-q|-d(q,\gamma)|\le r_2$.
\end{lemma}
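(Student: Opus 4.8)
The plan is to verify the first defining condition of geodesic richness for a nonelementary hyperbolic group $G$, namely that for points $p,q$ with $|p-q|$ large, there is a geodesic $\gamma$ passing close to $p$ whose distance to $q$ is essentially $|p-q|$. The natural strategy is to build such a $\gamma$ as a geodesic ray (or bi-infinite geodesic) issuing from near $p$ in a direction that points ``away'' from $q$, so that $q$'s nearest point on $\gamma$ is the endpoint near $p$ and hence $d(q,\gamma)\approx|q-p|$. First I would use symmetry (left-invariance of the word metric) to reduce to the case where $p$ is the identity of $G$, exactly as in Lemma~\ref{hypGr1}. This places us in the setting where we may reason about the ideal boundary $G(\infty)$ equipped with a visual distance, with $p$ as the basepoint.

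Next I would invoke the nonelementarity of $G$: by the argument reproduced in the proof of Lemma~\ref{hypGr1}, there is a fixed $\varepsilon>0$ such that no two $\varepsilon$-balls in $G(\infty)$ cover the whole boundary, so the boundary carries at least three well-separated points. The point $q$ determines (up to bounded ambiguity) a direction $\xi_q\in G(\infty)$, namely the endpoint of a geodesic ray from $p$ through (near) $q$. I would choose a boundary point $\eta$ at visual distance at least $\varepsilon$ from $\xi_q$, which is possible precisely because of the covering obstruction above; then a geodesic $\gamma'$ with one endpoint $\eta$ and passing near $p$ makes a definite angle with the direction to $q$. This is the step where I expect to lean hardest on the machinery already in place: Lemma~\ref{hypGr2} is stated in exactly the form needed to convert ``endpoints at visual distance $\ge\varepsilon$'' into ``there is a point $q'$ on $\gamma'$ minimizing the distance to the relevant geodesic, with $|p-q'|\le c_2$,'' giving the required control $d(p,\gamma')\le r_1$.

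The hard part will be the second estimate, $||p-q|-d(q,\gamma)|\le r_2$, i.e.\ ensuring that $q$'s distance to $\gamma$ is not much smaller than $|q-p|$. The danger is that even though $\gamma$ starts near $p$ and heads toward $\eta$, it might bend back toward $q$ and come close to it again. To rule this out I would use thin-triangle geometry: consider the geodesic triangle with vertices $p$, $q$, and a far point of $\gamma$ near $\eta$, and use $\delta$-thinness together with the visual separation $|\xi_q-\eta|\ge\varepsilon$ to show that the Gromov product $(q,\eta)_p$ is bounded by a constant depending only on $\varepsilon$ and $\delta$. A bounded Gromov product forces the projection of $q$ onto $\gamma$ to lie within a bounded distance of the initial point of $\gamma$ near $p$, and then the triangle inequality gives $d(q,\gamma)\ge|q-p|-r_2$ for a suitable $r_2$; the reverse inequality $d(q,\gamma)\le|q-p|+r_2$ is automatic since $\gamma$ passes within $r_1$ of $p$.

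Finally I would assemble the constants $r_0,r_1,r_2$ explicitly in terms of $\varepsilon$, $c_1$, $c_2$, and $\delta$, taking $r_0$ large enough that the visual-distance estimates used to select $\eta$ are valid (this is where the hypothesis $|p-q|>r_0$ is consumed, mirroring the role of $c_1$ in Lemma~\ref{hypGr1}). Combining the two displayed conclusions yields both bullet points of the first richness condition simultaneously, and since the second richness condition follows directly from Lemma~\ref{hypGr1} applied to an arbitrary geodesic and point, the two lemmas together establish that $G$ is geodesically rich.
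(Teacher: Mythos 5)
Your proposal shares the paper's geometric core --- force the Gromov product of $q$ with the endpoint(s) of the chosen geodesic to be bounded in terms of $\varepsilon_0=-\ln\varepsilon$ and $\delta$, so that $q$ projects near $p$ and hence $d(q,\gamma)\ge|p-q|-r_2$, while visual separation of the endpoints keeps $\gamma$ within bounded distance of $p$ --- but you run the argument directly, point by point, whereas the paper argues by contradiction: it takes a sequence $q_n$ with $|q_n-p|\to\infty$ violating the conclusion, extracts a limit point $\xi\in G(\infty)$ by compactness of the bordification, chooses $\eta,\eta'$ pairwise $\varepsilon$-separated from $\xi$, and shows that the single bi-infinite geodesic $\eta\eta'$ verifies both estimates for all large $n$. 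This structural difference is not cosmetic: the compactness setup is exactly what lets the paper avoid the step on which your argument leans hardest.

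That step is the genuine gap: you assert that ``$q$ determines (up to bounded ambiguity) a direction $\xi_q\in G(\infty)$, namely the endpoint of a geodesic ray from $p$ through (near) $q$.'' In a general nonelementary hyperbolic group, the claim that \emph{every} point lies within a \emph{uniformly} bounded distance of some geodesic ray issuing from the fixed basepoint is true but is itself a theorem: a geodesic segment $[p,q]$ need not extend to a ray, and left-translating a fixed ray so that it starts at $q$ changes its origin, not its direction as seen from $p$, so its Gromov product with $q$ at $p$ can be $0$. The standard proof takes an infinite-order element $g$, uses that $n\mapsto qg^n$ is a bi-infinite quasi-geodesic through $q$ with constants independent of $q$, applies Theorem~\ref{MainTheorem} to replace it by a geodesic $\sigma_q$ at bounded Hausdorff distance, and then shows one of the two rays from $p$ to the endpoints of $\sigma_q$ passes uniformly near $q$; none of this machinery (existence of infinite-order elements, quasi-geodesic stability of $\langle g\rangle$) is available in the paper, and without $\xi_q$ your choice of $\eta$ ``visually far from the direction of $q$'' is undefined. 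The cleanest repair stays within the paper's toolkit and bypasses $\xi_q$ entirely: if $(q|\eta)_p>C$ and $(q|\eta')_p>C$, then $(\eta|\eta')_p\ge C-\delta$, i.e.\ $\vd_p(\eta,\eta')\le e^{-(C-\delta)}$; so for $C=\varepsilon_0+\delta$ the set of boundary points with $(q|\eta)_p>C$ lies in a single visual $\varepsilon$-ball, and the covering argument you already quoted from Lemma~\ref{hypGr1} yields $\eta$ outside it. With such an $\eta$, your Gromov-product computation goes through verbatim and gives $d(q,\gamma)\ge|p-q|-2(\varepsilon_0+2\delta)$ for a ray $\gamma$ from $p$ to $\eta$, recovering the lemma. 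As written, however, the uniform existence of $\xi_q$ is a missing, nontrivial ingredient.
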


\begin{figure}[!ht]
\center{\includegraphics[width=0.5\linewidth]{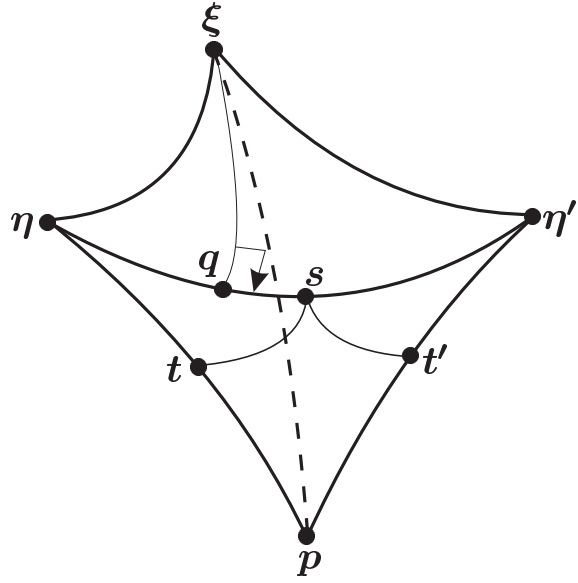}}
%\center{\includegraphics[width=0.5\linewidth]{lem14.eps}}
\caption{Illustration for Lemma~\ref{lem14}.}
\label{lem14fig}
\end{figure}

\begin{proof}
We first assume that $p$ is the unity of the group. We argue by
contradiction: we suppose that the statement is false, i.e., there exists a
sequence of points ${q_n}$ such that $|q_n-p|\to\infty$ as $n\to\infty$, and
all pairs $p$ and $q_n$ do not satisfy the conditions in the lemma. We
suppose that $\xi$ is a limit point of this sequence. As in the proof of
Lemma~\ref{hypGr1}, we supply the boundary of the group with a visual
metric. And the same arguments provide that there exist $\varepsilon>0$ and
points $\eta$ and $\eta'$ on the ideal boundary $G(\infty)$ such that that
the pairwise visual distances between $\xi$, $\eta$, and $\eta'$ are greater
than $\varepsilon$ (see Fig.~\ref{lem14fig}). We show that the geodesic
$\gamma$ with the endpoints $\eta$ and $\eta'$ satisfies the conditions in
the lemma, which leads to the contradiction.

In what follows, we write $\xi$, $\eta$, and $\eta'$ but assume that we
consider three sequences of points converging to the corresponding points of
the ideal boundary. The triangle $p\eta\eta'$ is $\delta$-thin. We take a
point $s$ of $\eta\eta'$ such that $d(s,p\eta)\le\delta$ and
$d(s,p\eta')\le\delta$. We let $t$ and $t'$ denote projections of $s$
respectively on $p\eta$ and $p\eta'$. By the triangle inequality, we have
$$
|\eta-t|+|\eta'-t'|-2\delta\le|\eta-\eta'|\le|\eta-t|+|\eta'-t'|+2\delta.
$$
By hypothesis,
$$
\vd_p(\eta,\eta')=e^{-(\eta|\eta')_p}>\varepsilon.
$$
Hence,
$$
|p-\eta|+|p-\eta'|-|\eta-\eta'|<2\varepsilon_0,
$$
where $\varepsilon_0=-\ln\varepsilon$

Combining the two inequalities, we obtain $|p-t|+|p-t'|\le2(\varepsilon_0+
\delta)$ and $d(p,\eta\eta')\le2\varepsilon_0+3\delta$. The same arguments
applied to the triangles $p\eta\xi$ and $p\eta'\xi$ show that the distance
from the point $p$ to the geodesics $\eta\xi$ and $\eta'\xi$ also does not
exceed $2\varepsilon_0+3\delta$. We let $p_1$, $p_2$, and $p_3$ denote the
respective projections of $p$ on $\eta\eta'$, $\eta\xi$, and $\eta'\xi$
and $q$ denote the projection of $\xi$ on $\eta\eta'$. By the triangle
inequality, $|p_1-p_2|\le|p_1-p|+|p-p_2|\le2(2\varepsilon_0+3\delta)$.
Applying Lemma~\ref{orthTriangle} to the triangles $q\xi\eta$ and
$q\xi\eta'$, we find that the point $q$ is not farther than $2\delta$ from
both $\eta\xi$ and $\eta'\xi$. Therefore, both $p_1$ and $q$ are at bounded
distances from $\eta\xi$ and $\eta'\xi$, and we can apply
Lemma~\ref{thinTriangle}, whence it follows that $p_1$ and $q$ are near
each other at a distance of the order $\varepsilon_0+\delta$.
\end{proof}

%\begin{lemma}
% Let $X$ be a $\delta$-hyperbolic space containing two geodesics $\gamma$
% and $\gamma'$ at a distance greater than $2\delta$ from each other. Then
% the length of the projection of $\gamma'$ on $\gamma$ does not exceed
% $6\delta$.
%\end{lemma}

%\begin{proof}
% Let $a$ and $b$ denote the endpoints of $\gamma'$ and $p$ and $q$ denote
% their projections on $\gamma$. The triangle $apq$ is $\delta$-thin. We
% find a point $r$ on $aq$ such that the distances from $r$ to $ap$ and $pq$
% do not exceed $\delta$. Because the distance between $\gamma$ and
% $\gamma'$ is greater than $2\delta$, we have $d(r,ab)>\delta$. But the
% triangle $abq$ is $\delta$-thin, and $d(r, bq)$ hence does not exceed
% $\delta$. Let $r_1$ and $r_2$ denote the projections of $r$ on $ap$ and
% $bq$. We have
% \begin{itemize}
% \item $|r_1-r_2|\le2\delta$,
% \item $d(r_1,\gamma)\le2\delta$, and $d(r_1,\gamma)\le2\delta$.
% \end{itemize}
% Hence, the length of the projection of $\gamma'$ on $\gamma$ is not
% greater than $6\delta$.
%\end{proof}

\begin{lemma}\label{VDP}
Let $X$ be a $\delta$-hyperbolic space, $\xi$ and $\eta$ be two points of
the ideal boundary $\partial X$, and $p$ and $p'$ be two points such that
$d(p,p')=D$. Then the visual distances between $\xi$ and $\eta$ from the
points $p$ and $p'$ satisfy the inequality
$$
\vd_{p'}(\xi,\eta)\le e^D\vd_p(\xi,\eta).
$$
\end{lemma}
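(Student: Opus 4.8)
The plan is to unwind the definition of visual distance in terms of the Gromov product and reduce the claim to a single inequality about how the Gromov product shifts when the basepoint moves. Recall that by definition $\vd_p(\xi,\eta)=e^{-(\xi|\eta)_p}$, where $(\xi|\eta)_p$ is the Gromov product extended to the boundary, computed as a limit (or liminf) of $(x_n|y_n)_p$ along sequences $x_n\to\xi$ and $y_n\to\eta$. Taking logarithms, the asserted inequality $\vd_{p'}(\xi,\eta)\le e^D\vd_p(\xi,\eta)$ is equivalent to
$$
(\xi|\eta)_p-(\xi|\eta)_{p'}\le D.
$$
So the entire statement reduces to controlling the change of the Gromov product under the move from $p$ to $p'$, which is a purely metric fact requiring no hyperbolicity.

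First I would prove the finite-point version. For arbitrary points $x,y\in X$, writing out the definition gives
$$
(x|y)_p-(x|y)_{p'}=\tfrac12\bigl(|p-x|-|p'-x|\bigr)+\tfrac12\bigl(|p-y|-|p'-y|\bigr),
$$
since the common term $-\tfrac12|x-y|$ cancels. The triangle inequality yields $|p-x|-|p'-x|\le|p-p'|=D$ and likewise $|p-y|-|p'-y|\le D$, so each half-term is at most $D/2$ and hence $(x|y)_p-(x|y)_{p'}\le D$ for every pair $x,y$.

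Next I would pass to the ideal boundary. Fixing sequences $x_n\to\xi$ and $y_n\to\eta$, the finite estimate gives $(x_n|y_n)_p\le(x_n|y_n)_{p'}+D$ for all $n$. Taking $\liminf$ of both sides and using that $\liminf a_n\le\liminf b_n+D$ whenever $a_n\le b_n+D$, I obtain $(\xi|\eta)_p\le(\xi|\eta)_{p'}+D$, which is exactly the required inequality. Substituting back into the definition of the visual distance completes the proof.

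The computation itself is entirely elementary; the only point demanding a little care is the passage to the limit, since the Gromov product at infinity is defined only up to a bounded (order-$\delta$) ambiguity depending on the chosen sequences. The main obstacle is therefore to ensure the limiting step is clean: by computing both $(\xi|\eta)_p$ and $(\xi|\eta)_{p'}$ along the \emph{same} sequences $x_n,y_n$, the per-term inequality holds for every $n$ with the sharp constant $D$, and the $\delta$-dependent defects never enter. This is why the bound in the lemma has the clean exponential form $e^D$ with no additional multiplicative constant.
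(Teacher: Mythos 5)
Your proof is correct and follows essentially the same route as the paper's: both expand the Gromov product, apply the triangle inequality to bound the change of $(\cdot|\cdot)$ under the basepoint move from $p$ to $p'$ by $D$, and then exponentiate via the definition $\vd_p(\xi,\eta)=e^{-(\xi|\eta)_p}$. Your extra care in passing to the ideal boundary along the same sequences (which the paper glosses over, and which indeed keeps the constant exactly $e^D$ with no $\delta$-dependent loss) is a welcome refinement, not a different method.
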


\begin{proof}
By definition, Gromov's product of $x$ and $y$ in $p$ is
$$
(x|y)_p=\frac12(|p-x|+|p-y|-|x-y|).
$$
We have the same equality for $x$, $y$, and $p'$. Hence,
$$
|(x|y)_{p'}-(x|y)_p|=|\frac12(|p'-x|+|p'-y|-|p-x|-|p-y|)|\le|p-p'|.
$$
The last inequality follows from the triangle inequality. Therefore, by the
definition of a visual metric,
$$
\vd_{p'}(\xi,\eta)=e^{(\xi|\eta)_{p'}}\le
e^{(\xi|\eta)_p+|p-p'|}=e^D\vd_p(\xi,\eta).
$$
\end{proof}

\section{Quasi-isometries fixing the ideal boundary}\label{idBoundary}

We now give some estimates of the displacement of points in geodesically
rich spaces under quasi-isometries that fix the ideal boundary. We do not
yet know whether these results are optimal.

\begin{remark}
Let $X$ be a metric space satisfying the first condition in the definition
of geodesically rich. Let $f:X\to X$ be a $(\lambda,c)$-self-quasi-isometry
fixing the boundary $\partial X$. Then for sufficiently large $\lambda$ and
any point $O\in X$, $d(f(O),O)\le H(\lambda,c,\delta)+r_2$, where the
constant $C_1$ depends only on the space $X$.
\end{remark}

\begin{proof}
For any point $O$, $r_1\le H(\lambda,c,\delta)$ for sufficiently large
$\lambda$ if $d(O,f(O))<r_0$. Otherwise, let $\gamma$ be a geodesic such
that $d(O,\gamma)\le r_1$ and $d(f(O),\gamma)>d(O,f(O))-r_2$. Because
$f(\gamma)$ is a quasi-geodesic with the same endpoints as $\gamma$, the
quasi-geodesic lies near $\gamma$: $f(\gamma)\subset U_H(\gamma)$. Combining
all the arguments, we obtain
$$
d(O,f(O))\le d(f(O),\gamma)+r_2\le H+r_2.
$$
\end{proof}

We do not know if it is possible to improve this upper bound in the general
case. But in the case of a geodesically rich space, we can improve the
bound from $\lambda^2$ to $\lambda\ln\lambda$.

\begin{thnn}[see Theorem~\ref{GRSTheorem} in the introduction]
Let $X$ be a $(r_1,r_2)$-geodesically rich $\delta$-hyperbolic metric space
and $f$ be a $(\lambda,c)$-self-quasi-isometry fixing a boundary
$\partial X$. Then for any point $O\in X$,
$d(O,f(O))\le\max(r_0,\lambda (r_3+c+c_1\ln\lambda)+r_1+r_2+r_4)$.
\end{thnn}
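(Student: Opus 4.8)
The plan is to combine the geodesic-richness hypotheses with the anti-Morse lemma (Theorem~\ref{SecondTheorem}) to control the displacement, in the same spirit as the preceding remark but sharpening the bound from $H\sim\lambda^2$ down to $\lambda\ln\lambda$. First I would dispose of the trivial case: if $d(O,f(O))\le r_0$, the claimed maximum already covers it, so I assume $d(O,f(O))>r_0$. Then I invoke the first condition of geodesic richness with $p=O$ and $q=f(O)$ (their distance exceeds $r_0$) to produce a geodesic $\gamma$ with $d(O,\gamma)<r_1$ and $\bigl||f(O)-O|-d(f(O),\gamma)\bigr|<r_2$. The point of this choice is that $\gamma$ starts near $O$ but runs essentially \emph{away} from $f(O)$, so that $d(f(O),\gamma)\ge d(O,f(O))-r_2$; this is what lets a bound on $d(f(O),\gamma)$ translate into the desired bound on the displacement.

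Next I would apply the image geodesic. Since $f$ fixes the boundary $\partial X$, the quasi-geodesic $f(\gamma)$ has the same endpoints at infinity as $\gamma$; equivalently a geodesic $\sigma$ with those endpoints can be compared to $f(\gamma)$. Here is where the anti-Morse lemma enters rather than the ordinary Morse lemma: Theorem~\ref{SecondTheorem} tells us that $\sigma$ (and hence, after accounting for the richness constants, the original geodesic $\gamma$, whose endpoints match) lies within an $A_3(c+\delta)\ln\lambda$-neighborhood of $f(\gamma)$. Because $O$ itself sits within $r_1$ of $\gamma$, the point $O$ is within roughly $r_1+A_3(c+\delta)\ln\lambda$ of the quasi-geodesic $f(\gamma)$. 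The crucial observation is that a point of $f(\gamma)$ realizing this is an image $f(x)$ of some $x\in\gamma$, and we can pull this back through the quasi-isometry estimate: $d(f(O),f(x))\le\lambda\,d(O,x)+c$. The second richness condition (with $r_3,r_4$) is then used to guarantee that we may take $x$ close to the foot of the perpendicular from $O$ to $\gamma$, keeping $d(O,x)$ bounded by $r_3$ up to the richness error, so that $d(O,f(x))$ is controlled.

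Assembling the chain of inequalities, I would write $d(O,f(O))\le d(f(O),\gamma)+r_2$, bound $d(f(O),\gamma)$ by the distance from $f(O)$ to the near point $f(x)$ on $f(\gamma)$ plus the anti-Morse neighborhood width, and then expand $d(f(O),f(x))\le\lambda\,d(O,x)+c\le\lambda(r_3+c_1\ln\lambda)+c$ after substituting the anti-Morse bound $c_1\ln\lambda$ for the relevant distance and $r_3$ for the richness displacement. Collecting the additive constants $r_1,r_2,r_4$ from the two richness conditions yields exactly
$$
d(O,f(O))\le\lambda\bigl(r_3+c+c_1\ln\lambda\bigr)+r_1+r_2+r_4,
$$
valid whenever this exceeds $r_0$, which is the stated bound.

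The main obstacle I anticipate is bookkeeping the geometry so that the two richness conditions genuinely interlock with the anti-Morse estimate: the anti-Morse lemma controls how far $\sigma$ strays from $f(\gamma)$, but I must certify that the witnessing point of $f(\gamma)$ is the image of a point $x\in\gamma$ lying \emph{near $O$} rather than somewhere far along $\gamma$, for otherwise the factor $\lambda\,d(O,x)$ blows up. This is precisely what the second richness condition (the $r_3,r_4$ clause) is designed to secure, by letting me choose $\gamma'$ so that the relevant minimizing point sits within $r_3$ of $O$; making that selection compatible with the first condition's choice of $\gamma$, and tracking that all the $\delta$-level errors from the thin-triangle and projection lemmas (Lemmas~\ref{lemAC}--\ref{orthPlane}) stay additive rather than multiplicative in $\lambda$, is the delicate part of the argument.
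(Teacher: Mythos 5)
There is a genuine gap, and it sits exactly where you yourself flagged the ``delicate part'': you apply the anti-Morse lemma to the wrong geodesic, and consequently you need the quasi-isometry inequality in a direction that is unavailable. You apply Theorem~\ref{SecondTheorem} to $\gamma$ (the geodesic from the first richness condition), obtaining a point $f(x)\in f(\gamma)$ with $d(O,f(x))\le r_1+c_1\ln\lambda$, and you then invoke the push-forward estimate $d(f(O),f(x))\le\lambda\,d(O,x)+c$, which is only useful if $d(O,x)$ is bounded by constants. But nothing controls $d(O,x)$: what you know is that the \emph{image} $f(x)$ is near $O$, while $x$ itself may lie far along $\gamma$ --- at distance comparable to the very displacement you are trying to bound (think of a map that shifts points along $\gamma$: the preimage of a point near $O$ sits a full displacement away from $O$). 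So the step is circular: bounding $d(O,x)$ by constants is essentially equivalent to bounding the displacement of points of $\gamma$ near $O$, which is the theorem itself. The second richness condition cannot rescue this, because it only produces an auxiliary geodesic $\gamma'$ passing near a prescribed point and near-minimizing the distance to $\gamma$; it gives no control whatsoever over where the preimage under $f$ of an anti-Morse witness lies on $\gamma$.

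The paper's proof arranges matters so that the anti-Morse witness has its image near $f(O)$ rather than near $O$, which is what allows the quasi-isometry inequality to be used in the pull-back (lower-bound) direction. Concretely: apply the second richness condition at the point $f(O)$ relative to $\gamma$, obtaining a geodesic $\gamma'$ with $d(f(O),\gamma')\le r_3$ such that $f(O)$ is, up to $r_4$, the point of $\gamma'$ nearest $\gamma$. Since $f$ fixes $\partial X$, the anti-Morse lemma applies to $\gamma'$ and $f(\gamma')$, so $\gamma'\subset V_{c_1\ln\lambda}(f(\gamma'))$; hence there exists $O'\in\gamma'$ with $d(f(O),f(O'))\le r_3+c_1\ln\lambda$. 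The quasi-isometry definition is then used in the contracting direction, $d(O,O')\le\lambda\bigl(d(f(O),f(O'))+c\bigr)\le\lambda(r_3+c+c_1\ln\lambda)$ --- the only place $\lambda$ enters multiplicatively. Finally the $r_4$ clause converts this into a bound on the quantity you actually need: $d(f(O),\gamma)\le d(O',\gamma)+r_4\le d(O,O')+r_1+r_4$, and the first condition closes the chain via $d(O,f(O))\le d(f(O),\gamma)+r_2$. Your treatment of the trivial case and of the first richness condition is correct, but without rerouting the anti-Morse application through $\gamma'$ at $f(O)$, your argument does not close.
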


\begin{proof}
Because $f$ fixes the boundary of $X$ and by the anti-Morse lemma, a
$(c_1\ln\lambda)$-neighborhood (where $c_1=c+\delta$) of an image
$f(\sigma)$ of any geodesic $\sigma$ includes $\sigma$: $\sigma\subset
V_{c_1\ln\lambda}(f(\sigma))$. All the constants $r_0$, $r_1$, $r_2$,
$r_3$, and $r_4$ are the same constants as in the definition of a
geodesically rich space. We take an arbitrary point $O\in X$. We assume
that $d(O,f(O))\ge r_0$ because otherwise there is nothing to prove. There
exists a geodesic $\gamma$ such that $d(\gamma,O)\le r_1$ and
$|d(O,f(O))-d(f(O),\gamma)|\le r_2$, and there also exists a geodesic
$\gamma'$ such that $f(O)$ lies in $r_3$-neighborhood of $\gamma'$ and
such that $f(O)$ is (up to $r_4$) the point of $\gamma'$ that is nearest
$\gamma$.

% We take a geodesic $\gamma$ in $X$. Let $O$ be a point of $\gamma$ such
% that $f(O)$ is the point of $f(\gamma)$ most distant from $\gamma$.
% Let $\gamma'$ be a geodesic with different endpoints (at infinity)
% such that $f(O)$ lies in an $r_1$-neighborhood of $\gamma'$ and such
% that $f(O)$ is the point of $\gamma'$ nearest $\gamma$.

Because $\gamma'\subset V_{c_1\ln\lambda}(f(\gamma'))$, there exists a
point $O'$ of $\gamma'$ such that $|f(O')-f(O)|\le r_3+c_1\ln\lambda$.
Now, $d(f(O),\gamma)\le d(O',\gamma)+r_4\le|O'-O|+r_1+r_4$, and by the
definition of a quasi-isometry, $|O'-O|\le\lambda(|f(O')-f(O)|+c)\le
\lambda(r_3+c+c_1\ln\lambda)$. Hence, $d(f(O),\gamma)\le
\lambda(r_3+c+c_1\ln\lambda)+r_1+r_4$. Finally, we conclude that
$d(O,f(O))\le d(f(O),\gamma)+r_2\le\lambda(r_3+c+c_1\ln\lambda)+
r_1+r_2+r_4$.
\end{proof}

\section{Acknowledgment}
I am thankful to Professor P.~Pansu for advising me through all the steps of
this research. This work was supported by the President of the Russian
Federation under the program of state support for leading scientific schools
(Grant No.~NSh-8462.2010.1).

\end{document}